\newtheorem{thm}{Theorem}[section]
\newtheorem{cor}[thm]{Corollary}
\newtheorem{prop}[thm]{Proposition}
\theoremstyle{definition}
\theoremstyle{remark}
\newtheorem{rem}[thm]{Remark}
\numberwithin{equation}{section}
\theoremstyle{remark}
\newcommand{\mbb}{\mathbb}
\newcommand{\ra}{\rightarrow}
\newcommand{\pa}{\partial}
\newcommand{\sm}{\setminus}
\newcommand{\no}{\noindent}
\newcommand{\Om}{\Omega}
\newcommand{\la}{\lambda}
\newcommand{\short}[1]{\textit{Short}  $\mathbb C^{#1}$}
\begin{document}
\title{A rigidity theorem for H\'{e}non maps}
\keywords{Polynomial automorphisms, H\'{e}non maps, rigidity, non-escaping sets,  \short{2}, Reinhardt domains}
\subjclass{Primary: 32H02  ; Secondary : 32H50}
\author{Sayani Bera, Ratna Pal, Kaushal Verma}

\address{SB: School of Mathematics, Ramakrishna Mission Vivekananda Educational and Research Institute, PO Belur Math, Dist. Howrah, 
West Bengal 711202, India}
\email{sayanibera2016@gmail.com}

\address{RP: Department of Mathematics, Indian Institute of Science Education and Research, Pune, Maharashtra-411008, India}
\email{ratna.math@gmail.com}

\address{KV: Department of Mathematics, Indian Institute of Science, Bangalore 560 012, India}
\email{kverma@iisc.ac.in}

\begin{abstract}
The purpose of this note is two fold. First, we study the relation between a pair of H\'{e}non maps that share the same forward and backward non-escaping sets. Second, it is shown that there exists a continuum of \short{2}'s that are biholomorphically inequivalent and finally, we provide examples of \short{2}'s that are neither Reinhardt nor biholomorphic to Reinhardt domains.
\end{abstract}

\maketitle


\section{Introduction}

\no Let $P, Q$ be a pair of holomorphic polynomials in the plane and $J_P, J_Q$ the corresponding Julia sets. If $P$ and $Q$ commute, it is known that 
$J_P = J_Q$. Conversely, if $J_P = J_Q$, a basic question that has been studied rather extensively is to find relations between $P$ and $Q$. For example, a result of Beardon \cite{Be1}, that builds on the work of Baker--Er\"{e}menko \cite{BE}, shows that
\[
P \circ Q = \sigma \circ Q \circ P
\]
where $\sigma(z) = az + b$, $\vert a \vert = 1$ and $\sigma(J_P) = J_P$. In other words, two polynomials with the same Julia set necessarily commute up to a rigid motion, of the kind described above, that preserves the shared Julia set. Other related work on this question for polynomials can be found for example in \cite{Be2} and \cite{SS}, while the case of rational functions is dealt with in \cite{LP} and \cite{D} and the references therein.

\medskip

The purpose of this note is to prove a suitable analogue of this result in 
$\mathbb{C}^2$. This will be a consequence of a more general rigidity theorem. To explain this, let $\mathcal{H}$ be the family of polynomial automorphisms of 
$\mathbb{C}^2$ of the form
\begin{equation}
H = H_m \circ H_{m-1} \circ \cdots \circ H_1
\end{equation}
where
\begin{equation}
H_j(x, y) = (b_jy, p_j(y) - \delta_j x) 
\end{equation}
with $p_j$ a polynomial of degree $d_j \ge 2$ and $b_j\delta_j \not= 0$. The degree of $H$ is $d = d_1d_2 \ldots d_m$. In what follows, the phrase {\it H\'{e}non map} will refer to a map as in (1.1). The only difference between these expressions and the more commonly accepted normal form of a H\'{e}non map lies in the non-zero scalars $b_j$. The reasons for this apparent deviation from standard nomenclature will be explained later -- they certainly do not include any implication of generality of any sort.

\medskip

Let us recall in brief the basic dynamical objects associated with such maps, keeping in mind that the extra constants $b_j$ do not really influence the calculations in a significant manner -- see \cite{BS} for details. For $R > 0$, let 
\begin{align*}
V^+_R &= \{ (x,y) \in \mathbb C^2: \vert x \vert < \vert y \vert, \vert y \vert > R \},\\
V^-_R &= \{ (x,y) \in \mathbb C^2: \vert y \vert < \vert x \vert, \vert x \vert > R \},\\
V_R &= \{ (x, y) \in \mathbb C^2: \vert x \vert, \vert y \vert \le R \}.
\end{align*}
For a given $H \in \mathcal{H}$, or for that matter a compact family of H\'{e}non maps, there exists $R > 0$ such that
\[
H(V^+_R) \subset V^+_R, \; H(V^+_R \cup V_R) \subset V^+_R \cup V_R
\]
and
\[
H^{-1}(V^-_R) \subset V^-_R, \; H^{-1}(V^-_R \cup V_R) \subset V^-_R \cup V_R.
\]
Let
\[
K^{\pm}_H = \{(x, y) \in \mathbb C^2 : \;\text{the sequence}\; \left(H^{\pm n}(x, y) \right) \; \text{is bounded} \}
\]
be the set of non-escaping points. Then $K^{\pm} \subset V_R \cup V^{\mp}_R$ and
\begin{equation}\label{escape}
\mathbb C^2 \sm K^{\pm}_H = \bigcup_{n=0}^{\infty} (H^{\mp n})(V^{\pm}_R)
\end{equation}
In fact, $\mathbb C^2 \sm K^{\pm}$ is the set of points for which 
$\Vert H^{\pm n}(x, y) \Vert \rightarrow \infty$ as $n \rightarrow \infty$. Each $H \in \mathcal{H}$ extends meromorphically to $\mathbb{P}^2$ with an isolated indeterminacy point at $I^+ = [1:0:0]$ and the same holds for $H^{-1}$ which has $I^{-} = [0:1:0]$ as its lone indeterminacy point. The Green's functions
\[
G^{\pm}_H(x, y) = \lim_{n \rightarrow \infty} \frac{1}{d^n} \log^+ \Vert H^{\pm n}(x, y) \Vert
\]
are plurisubharmonic on $\mbb C^2$ and non-negative everywhere, pluriharmonic on $\mbb C^2 \sm K^{\pm}_H$ and vanish precisely on $K^{\pm}_H$. By construction, 
\[
G^{\pm}_H \circ H = d G^{\pm}_H.
\]
Both $G^{\pm}_H$ have logarithmic growth near infinity, i.e., there exist $R>0$ and $C>0$ such that   
\begin{equation}\label{L1}
\log^+ \lvert y \rvert-C\leq G_H^+ (x,y) \leq \log^+ \lvert y \rvert+C
\end{equation}
for $(x,y)\in \overline{V_R^+ \cup V_R}$, and 
\begin{equation}\label{L2}
\log^+ \lvert x \rvert-C\leq G_H^+ (x,y) \leq \log^+ \lvert x \rvert+C.
\end{equation}
for $(x,y)\in \overline{V_R^- \cup V_R}$. Hence
\begin{equation}\label{L3}
G_H^\pm (x,y)\leq \max \{\log^+ \lvert x\rvert, \log^+ \lvert y \rvert \}+ C
\end{equation}
for all $(x,y)\in \mathbb{C}^2$. Let $J^{\pm}_H = \partial K^{\pm}_H$. It turns out that $G_H^\pm$ are the pluricomplex Green's functions for $K^{\pm}_H$ respectively. The supports of the positive closed $(1,1)$ currents 
\[
\mu^{\pm}_H = dd^c G^{\pm}_H
\]
are $J^{\pm}_H$ and $\mu_H = \mu^+_H \wedge \mu^-_H$ is an invariant measure for $H$. Since we will be dealing with more than one H\'{e}non map, these objects carry a subscript to distinguish them from their counterparts associated with other maps. Here is the main result:

\begin{thm} \label{rigidity}
Let $H$ be a H\'{e}non map. Suppose that $F \in {\rm Aut}(\mbb C^2)$ is such that $F(K^{\pm}_H) = K^{\pm}_H$.  Then $F$ is a polynomial automorphism.  Furthermore,
\begin{itemize}
\item[(i)] if $\deg(F) = 1$, then $F$ is of the form 
\[
(x,y)\mapsto (cx+d, ey+f)
\]
and
\item[(ii)] if $\deg(F)\geq 2$, then either $F$ or $F^{-1}$ is a H\'{e}non map and accordingly there exists a linear map $C(x, y) = (\delta_- x, \delta_+ y)$ with $\vert \delta_\pm \vert = 1$ such that 
\[
F^{\pm 1} \circ H = C \circ H \circ F^{\pm 1}.
\]  
In addition, there exist positive integers $m_0, n_0$ and an affine automorphism $\sigma$ such that $F^{\pm m_0}=\sigma \circ H^{n_0}$ depending on whether $F$ or $F^{-1}$ is a H\'{e}non map.
\item[(iii)] There exists a H\'{e}non map $R$ such that any H\'{e}non map $F$ satisfying $F(K_H^\pm)=K_H^\pm$ is of the form $F=\sigma_F R^{r_F}$ for some affine automorphism $\sigma_F$ and for some integer $r_F \geq 1$. 
\end{itemize}

\no Conversely, if $F$ and $H$ be two H\'{e}non maps satisfying 
\[
F\circ H=C\circ H \circ F
\]
where $C:(x,y)\mapsto (\delta_- x, \delta_+ y)$ with $\lvert \delta_\pm \rvert =1$ and $C(K_H^\pm)=K_H^\pm$, then $F(K_H^\pm)=K_H^\pm$.
\end{thm}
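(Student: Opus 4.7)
The theorem has four ingredients: the fact that any $F\in\mathrm{Aut}(\mathbb{C}^2)$ with $F(K_H^\pm)=K_H^\pm$ must be polynomial, the affine-case description (i), the H\'{e}non-case structure (ii)--(iii), and the converse. The plan is to first promote $F$ to a polynomial automorphism and classify it via Friedland--Milnor, then extract the algebraic relations by comparing the action of $F$ on the Green's functions $G_H^\pm$, and finally verify the converse directly through conjugation.

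To show $F$ is polynomial, I would note that $G_H^{\pm}\circ F$ is non-negative plurisubharmonic on $\mathbb{C}^2$ and vanishes precisely on $K_H^\pm$, since $F$ is a biholomorphism preserving $K_H^\pm$. Invoking the maximality characterization of the pluricomplex Green's functions of $K_H^\pm$ together with the logarithmic growth estimates \eqref{L1}--\eqref{L3}, I would compare $G_H^\pm\circ F$ with $G_H^\pm$ near infinity, concluding that $F$ grows polynomially on the filtration sets $V^\pm_R$; a standard Hartogs-type argument then yields that $F$ is a polynomial automorphism. Applying the Friedland--Milnor classification, $F$ is (up to conjugation) affine, elementary, or a composition of H\'{e}non-type maps. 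Elementary maps have degenerate non-escaping sets and cannot preserve two nontrivial sets of the form $K_H^\pm$ simultaneously, ruling out that case. In Case (i), the affine $F$ must preserve the asymptotic clustering of $K_H^+$ at $I^-=[0:1:0]$ and of $K_H^-$ at $I^+=[1:0:0]$ in $\mathbb{P}^2$; its linear part must therefore preserve both coordinate axes at infinity, hence be diagonal, yielding $(x,y)\mapsto(cx+d, ey+f)$.

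In Case (ii) with $\deg(F)\geq 2$, by the classification either $F$ or $F^{-1}$ is H\'{e}non-type. The Green's function comparison gives $G_H^{\pm}\circ F=\lambda_\pm G_H^\pm$ for some $\lambda_\pm>0$; combined with $G_H^{\pm}\circ H=d\,G_H^{\pm}$ this yields $G_H^{\pm}\circ(F\circ H)=G_H^{\pm}\circ(H\circ F)$. Hence $F\circ H$ and $H\circ F$ are polynomial automorphisms sharing the same Green's functions, so they differ by an affine automorphism preserving $K_H^\pm$, which by Case (i) is a diagonal map $C$; matching the $\log^+|x|$ and $\log^+|y|$ asymptotics on the two sides of $F\circ H=C\circ H\circ F$ forces $|\delta_\pm|=1$. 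For the identity $F^{\pm m_0}=\sigma\circ H^{n_0}$ and the root $R$ in (iii), I would analyze the group $\mathcal{G}$ of H\'{e}non-type polynomial automorphisms preserving $K_H^\pm$: the scalars $\lambda_\pm$ attached to elements of $\mathcal{G}$ are multiplicative under composition and form a subgroup of $\mathbb{R}_{>0}$; since every H\'{e}non-type element has integer degree $\geq 2$, this subgroup is cyclic, and its minimal generator is realized by a H\'{e}non-type map $R$. Every $F\in\mathcal{G}$ then has $\lambda_\pm(F)=\lambda_\pm(R)^{r}$ for some integer $r\geq 1$, and the same Green's function comparison yields $F=\sigma_F\circ R^{r_F}$ for an affine $\sigma_F$; rearranging an analogous relation between $F^{m_0}$ and $H^{n_0}$ gives the iteration identity. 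The main obstacle here will be controlling the affine correction and verifying compatibility of the cyclic structures attached to $G_H^+$ and $G_H^-$ simultaneously.

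For the converse, assume $F\circ H=C\circ H\circ F$ with $C(x,y)=(\delta_-x,\delta_+y)$, $|\delta_\pm|=1$, and $C(K_H^\pm)=K_H^\pm$. Rewriting gives $F\circ H\circ F^{-1}=C\circ H$, so $F$ conjugates $H$ to $CH$. Since $|\delta_\pm|=1$, the map $C$ preserves each of the filtration sets $V^{\pm}_R$ and $V_R$, so $CH$ satisfies exactly the same filtration inclusions as $H$ and therefore $K_{CH}^{\pm}=K_H^{\pm}$. Conjugation by $F$ carries non-escaping sets to non-escaping sets, so $F(K_H^\pm)=K_{CH}^\pm=K_H^\pm$, as required.
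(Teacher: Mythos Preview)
Your outline has the right architecture, but two steps contain genuine gaps that the paper's machinery is specifically designed to close. First, the argument promoting $F$ to a polynomial automorphism is circular as written: invoking the maximality characterization of $G_H^{\pm}$ requires knowing that $G_H^{\pm}\circ F$ has at most logarithmic growth at infinity, but that is equivalent to knowing the growth of $F$ itself, which is exactly what you are trying to establish. The paper breaks this circle with the Buzzard--Forn{\ae}ss period argument: restrict $G_H^{+}\circ F$ to a vertical line $\{x=x_0\}$, where it is harmonic off a compact set and therefore has a well-defined period $b_{x_0}$; a Stokes computation using pluriharmonicity of $G_H^{+}\circ F$ on $\mathbb{C}^2\setminus K_H^{+}$ shows $b_{x_0}$ is independent of $x_0$, and this yields $G_H^{\pm}\circ F=b^{\pm}G_H^{\pm}$ directly, after which polynomial growth follows from the logarithmic estimates.

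Second, your route to the commutation relation recovers $C$ only as an affine map, not a linear one. From $G_H^{\pm}\circ(F\circ H)=G_H^{\pm}\circ(H\circ F)$ you can indeed conclude that $L=(F\circ H)\circ(H\circ F)^{-1}$ is affine of the form $(cx+d,ey+f)$ with $|c|=|e|=1$, but the Green's functions see only moduli and place no constraint on $d,f$. The paper uses the B\"ottcher coordinates $\phi_H^{\pm}$, which satisfy the sharper asymptotic $\phi_H^{+}(x,y)\sim y$: from $\phi^{+}\circ(F\circ H)=\delta_{+}\,\phi^{+}\circ(H\circ F)$ one gets that the polynomial $(F\circ H)_2(x_0,y)-\delta_{+}(H\circ F)_2(x_0,y)$ tends to $0$ as $|y|\to\infty$, hence vanishes identically, forcing the translation part to be zero. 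Two smaller issues: Friedland--Milnor classifies only up to conjugation, so it does not by itself put $F$ in the literal form (1.1)--(1.2); the paper's Jung-theorem case analysis on the word decomposition is what does this. And in the converse, ``$CH$ has the same filtration inclusions as $H$'' does not imply $K_{CH}^{\pm}=K_H^{\pm}$ (distinct H\'enon maps share the same filtration); you must actually use the hypothesis $C(K_H^{\pm})=K_H^{\pm}$, as the paper does, to obtain both inclusions.
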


\begin{cor}
Let $F, H$ be H\'{e}non maps such that $K^{\pm}_H = K^{\pm}_F$. Then there exists a linear map $C(x, y) = (\delta_-x, \delta_+y)$ with $\vert \delta_{\pm} \vert = 1$ such that $F \circ H = C \circ H \circ F$. In particular, if each of $F, H$ are finite compositions of maps of the form
\[
(x, y) \mapsto (y, p(y) - x)
\]
with $p$ a monic polynomial (such maps are volume preserving), and $K^{\pm}_H = K^{\pm}_F$, then $F \circ H = H \circ F$. 
\end{cor}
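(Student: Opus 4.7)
My plan is to derive both parts of the Corollary from Theorem~\ref{rigidity}.

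For the first assertion, I verify the hypothesis of Theorem~\ref{rigidity}: since $F$ is itself a H\'enon map it preserves its own non-escaping sets, i.e.\ $F(K_F^\pm)=K_F^\pm$; together with $K_F^\pm=K_H^\pm$ this yields $F(K_H^\pm)=K_H^\pm$. Since $\deg F\ge 2$, part~(ii) of Theorem~\ref{rigidity} then applies and produces the linear map $C(x,y)=(\delta_-x,\delta_+y)$ with $|\delta_\pm|=1$ together with the relation $F\circ H=C\circ H\circ F$, exactly as required.

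For the ``in particular'' refinement I need to upgrade $|\delta_\pm|=1$ to $\delta_\pm=1$ under the extra hypothesis that every compositional factor of both $F$ and $H$ has the monic volume-preserving form $(x,y)\mapsto(y,p(y)-x)$. The natural approach is to compare the top-degree homogeneous parts of both sides of $F\circ H=C\circ H\circ F$. A short induction on the number of factors shows that for $H=H_m\circ\cdots\circ H_1$ with $H_j(x,y)=(y,p_j(y)-x)$ and $p_j$ monic of degree $d_j\ge 2$, the top-degree homogeneous parts of the two components of $H(x,y)$ are $y^{d_H/d_m}$ and $y^{d_H}$ respectively, each with coefficient~$1$; analogously for $F$ with its own degree data $e_1,\ldots,e_n$ and $d_F=e_1\cdots e_n$.

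Substituting, the top-degree parts of the second components of $F\circ H$ and $H\circ F$ are both $y^{d_Fd_H}$ with coefficient~$1$, so matching coefficients in the identity immediately forces $\delta_+=1$. The analogous computation on the first components yields leading monomials $y^{d_Fd_H/e_n}$ and $y^{d_Fd_H/d_m}$; equality of the two polynomials up to the nonzero scalar $\delta_-$ forces $e_n=d_m$ (an a posteriori rigidity on the terminal factors), after which matching the leading coefficients gives $\delta_-=1$. Hence $C=\mathrm{id}$ and $F\circ H=H\circ F$.

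Theorem~\ref{rigidity} absorbs all the genuine content, so I anticipate no serious obstacle. The only delicate point in the refinement is to justify the inductive claim that the top-degree part of each coordinate of an iterated monic H\'enon composition is a pure power of $y$ with coefficient~$1$; the condition $d_j\ge 2$ ensures that no cross terms $(H^{(1)})^a(H^{(2)})^b$ can rival the degree of the leading pure power, so the bookkeeping is clean.
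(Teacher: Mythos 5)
Your proof is correct. The first assertion is the immediate application of Theorem~\ref{rigidity}(ii), exactly as the paper intends, after noting (as you do) that a H\'enon map $F$ with $K_F^\pm=K_H^\pm$ automatically preserves $K_H^\pm$ and has $\deg F\ge 2$. For the ``in particular'' part the paper gives no explicit argument; the route naturally suggested by its machinery is to observe that when every factor has the monic, volume-preserving form, the leading constants $c_H,c_F,c_H',c_F'$ from Section~2 are all equal to $1$, whence the expressions defining $\delta_\pm$ in the proof of Theorem~\ref{rigidity} (in terms of B\"ottcher data and principal roots) collapse to $1$. Your argument instead compares top-degree homogeneous parts directly in the identity $F\circ H=C\circ H\circ F$: the induction showing that the two coordinates of a monic H\'enon composition $H_m\circ\cdots\circ H_1$ have pure leading monomials $y^{d_H/d_m}$ and $y^{d_H}$ with coefficient $1$ is correct (the inequality $d_j\ge 2$ rules out competing cross terms, as you note), and matching leading coefficients of the two components then gives $\delta_+=1$ and, after the degree count forces $e_n=d_m$, also $\delta_-=1$. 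This is an equally valid and somewhat more elementary route, and it has the incidental merit of avoiding the care needed about branches of roots in the B\"ottcher bookkeeping, where the identity recorded in the paper literally only forces $\delta_-$ to be a root of unity unless one tracks principal branches explicitly.
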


In the volume preserving case, it follows from \cite{Bi} that if $d_F$ and $d_H$ are the degrees of $F, H$ respectively, then there are positive integers $k, l$ such that $d_F^k = d_H^l$ and there exists an affine automorphism $\sigma$ such that 
$F^k =\sigma \circ H^l $.

\medskip

The proof of Theorem 1.1 requires several steps and begins by using an idea of Buzzard--Fornaess \cite{BF}. The hypotheses imply that $G^{\pm}_H \circ F$ vanishes precisely on $K_H^{\pm}$ and is pluriharmonic outside it. Its restriction to vertical slices of the form $x=$ constant is harmonic away from a compact set and therefore admits a harmonic conjugate modulo a period that apriori depends on the slice under consideration. However, the pluriharmonicity of $G^{\pm} \circ F$ away from $K^{\pm}_H$ shows that the period is independent of the vertical slice and this shows that $G^{\pm+}_H \circ F$ and $G^{\pm}_H$ agree upto a multiplicative constant. The logarithmic growth of $G^{\pm}_H$ then shows that $F$  must be a polynomial. Next, by Jung's theorem, $F$ can be written as a composition of affine and elementary maps and four cases arise depending on the nature of the first and last terms in this composition. Using the mapping properties of H\'{e}non maps with respect to the filtration and the hypothesis that $F$ preserves $K^{\pm}_H$ allows us to eliminate two cases. The remaining possibilities correspond precisely to the conclusion that either $F$ or $F^{-1}$ is a H\'{e}non map. Again, $F(K^{\pm}_H) = K^{\pm}_H$ implies that $K^{\pm}_H \subset K^{\pm}_F$ and the rigidity theorem of Dinh--Sibony \cite{DS} now shows that $J^{\pm}_H = J^{\pm}_F$ and hence $G^{\pm}_H = G^{\pm}_F$. Working with the associated B\"{o}ttcher functions then shows that $F$(or $F^{-1}$) and $H$ must commute up to a linear map $C$.

\medskip

The connection to \short{2}'s comes via the observation, due to Fornaess \cite{Fo}, that the sub-level sets of the Green's functions of H\'{e}non maps are examples of \short{2}'s. Recall that a \short{2} $\subset \mbb C^2$ is a domain satisfying the following properties: It can be exhausted by an increasing sequence of biholomorphic images of the ball $\mbb B^2 \subset \mbb C^2$, the Kobayashi metric on it vanishes identically, but it admits a non-constant plurisubharmonic function that is bounded above. We show that \short{2}'s arising as sub-level sets of the Green's functions of H\'{e}non maps are not Reinhardt. Furthermore, if the H\'{e}non map is chosen as a small perturbation of a hyperbolic polynomial in one variable, then the sub-level sets of the associated Green's functions cannot even be biholomorphic to a Reinhardt domain. 

\medskip

Finally, a word about the choice of the normal form of the maps in $\mathcal H$: every map in $\mathcal{H}$ is, by Friedland--Milnor \cite{FM}, conjugate to a finite composition of maps of the form $(x, y) \mapsto (y, p(y) - ax)$ for some $a \not= 0$. The natural starting point then would be to consider the family of maps in which each $b_j = 1$ -- these are the so called generalized H\'{e}non maps. If we work with such maps, the proof of the main theorem shows that every automorphism that preserves the associated non-escaping sets must be of the form (1.1) and there is no apriori reason for concluding that $b_j = 1$. The point, therefore, is that all calculations are done in a fixed coordinate system dictated by the choice of the map we begin with. Hence, the choice of the family 
$\mathcal{H}$ as in (1.1) is not an artificial one but is naturally induced by the situation; it also serves to state the main theorem more succinctly.

\medskip

Finally, the one variable analogue of the main theorem is given in the Appendix. It supplements the existing known results on maps that share the same Julia set.


\section{B\"{o}ttcher functions and their relation with the Green's functions}
\no 
Let $H=H_m \circ H_{m-1}\circ \cdots \circ H_1$, where $H_j(x,y)=(b_jy,p_j(y)-\delta_j x)$ with $p_j$ a polynomial of degree $d_j$ with highest degree coefficient $c_j$ and $b_j \delta_j \not= 0$. Then $H(x,y)=(H_1(x,y),H_2(x,y))$ where the degree of $H_1$ is strictly less than that of $H_2$ when regarded as a polynomial in $y$ and in fact 
\begin{equation}\label{H1}
H_2(x,y)=c_H y^d+q(x,y)
\end{equation}
where 
\[
c_H = \prod_{j=1}^m {c_j}^{d_{j+1}\cdots d_m}
\]
with the convention that $d_{j+1}\cdots d_m=1$ when $j=m$, $d=d_j \cdots  d_1$ and $q$ is a polynomial in $x,y$ of degree strictly less than $d$. 

\medskip 

\no 
Similarly, $H^{-1}(x,y)=(H_1'(x,y),H_2'(x,y))$ where the degree of $H_2'$ is strictly less than that of $H_1'$ when regarded as a polynomial in $x$ and
\begin{equation}\label{H2}
H_1'(x,y)=c_H' x^d+q'(x,y)
\end{equation}
where 
\[
c_H'=\prod_{j=1}^m {\left({c_j}\delta_j^{-1}\right)}^{d_{j-1}\cdots d_1} b_j^{-d_j d_{j-1} \cdots d_1}
\]
with the convention that $d_{j-1}\cdots d_1=1$ when $j=1$ and $q'$ is a polynomial in $x,y$ of degree strictly less than $d$.

\medskip 
\no Keeping track of the various steps in the proof of Proposition 5.2 in \cite{HO} shows that:

\begin{prop}\label{Bot}
For a given H\'{e}non map $H$, there exist non-vanishing holomorphic functions 
$\phi_H^\pm: V_R^\pm \rightarrow \mathbb{C}$  such that 
\[
\phi_H^+\circ H(x,y)=c_H{(\phi_H(x,y))}^d
\]
in $V_R^+$ and 
\[
\phi_H^-\circ H^{-1}(x,y)=c_H'{(\phi_H^-(x,y))}^d
\]
in $V_R^-$. Further,
\[
\phi_H^+(x,y)\sim y \text{ as } \lVert(x,y)\rVert\rightarrow \infty \text{ in } V_R^+
\]
and
\[
\phi_H^-(x,y)\sim x \text{ as } \lVert(x,y)\rVert\rightarrow \infty \text{ in } V_R^-.
\]

\end{prop}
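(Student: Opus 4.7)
The plan is to mimic the classical Böttcher construction, producing $\phi_H^\pm$ as uniform limits of explicit $d^n$-th root expressions built from the iterates of $H^{\pm 1}$. I will describe the argument for $\phi_H^+$ on $V_R^+$; the construction for $\phi_H^-$ is identical after replacing $H$ by $H^{-1}$ and invoking (\ref{H2}) in place of (\ref{H1}).

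The starting point is the expansion (\ref{H1}): on $V_R^+$ one has $H_2(x,y) = c_H y^d(1 + r(x,y))$ where $r = q/(c_Hy^d)$ satisfies $|r| < 1/2$ once $R$ is large, because $\deg q < d$ and $|x| < |y|$ on $V_R^+$. Since $H(V_R^+) \subset V_R^+$, iterating gives, for $s_n := (d^n - 1)/(d-1)$,
\[
H_2^{(n)}(x,y) = c_H^{s_n} y^{d^n}\bigl(1 + \widetilde{r}_n(x,y)\bigr),
\]
where $H_2^{(n)}$ is the second coordinate of $H^n$. The ratio $H_2^{(n)}/y^{d^n}$ is a nonvanishing holomorphic function on $V_R^+$ tending to $c_H^{s_n}$ at infinity, so in particular its monodromy around the single generator of $\pi_1(V_R^+) = \mathbb{Z}$ (the $y$-loop at infinity) is trivial. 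Hence it admits a global holomorphic $d^n$-th root $\rho_n$ on $V_R^+$, normalized so that $\rho_n \to c_H^{s_n/d^n}$ at infinity. Set
\[
\phi_n^+(x,y) := c_H^{-s_n/d^n}\,\rho_n(x,y)\, y,
\]
which is a nonvanishing holomorphic function on $V_R^+$ with $\phi_n^+(x,y)\sim y$ at infinity and $(\phi_n^+)^{d^n} = c_H^{-s_n}H_2^{(n)}$.

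The next step is convergence. From $H_2^{(n+1)} = H_2\circ H^n = c_H(H_2^{(n)})^d(1 + \epsilon_n)$, where $\epsilon_n = q\circ H^n / (c_H(H_2^{(n)})^d)$, a direct calculation using the identity $s_{n+1} = 1 + ds_n$ gives the clean recursion
\[
(\phi_{n+1}^+)^{d^{n+1}} = (\phi_n^+)^{d^{n+1}}(1 + \epsilon_n),
\qquad\text{hence}\qquad
\phi_{n+1}^+/\phi_n^+ = (1+\epsilon_n)^{1/d^{n+1}},
\]
with the principal branch selected by the normalizations at infinity. Because $|H_2^{(n)}|$ is uniformly large on $V_R^+$ (by the dominance argument above), $|\epsilon_n|$ is uniformly bounded by, say, $1/2$, so $|\log(\phi_{n+1}^+/\phi_n^+)| \le C/d^{n+1}$. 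The telescoping product $\phi_H^+ := \lim_n \phi_n^+ = y\prod_{k\ge 0}(1+\epsilon_k)^{1/d^{k+1}}$ then converges uniformly on all of $V_R^+$ to a nonvanishing holomorphic function; the asymptotic $\phi_H^+(x,y)\sim y$ follows because each $\epsilon_k(x,y)\to 0$ as $\|(x,y)\|\to\infty$ in $V_R^+$.

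For the functional equation, note that
\[
\phi_n^+(H(x,y))^{d^n} = c_H^{-s_n}H_2^{(n+1)}(x,y) = c_H^{d^n}(\phi_{n+1}^+(x,y))^{d^{n+1}},
\]
and matching leading terms at infinity (both sides behave like $c_Hy^d$) forces $\phi_n^+\circ H = c_H(\phi_{n+1}^+)^d$; passing to the limit yields $\phi_H^+\circ H = c_H(\phi_H^+)^d$. The construction for $\phi_H^-$ on $V_R^-$ is entirely analogous, using (\ref{H2}) and the constant $c_H'$. The main technical point to watch is the branch selection: unlike the one-variable Böttcher construction, the domain $V_R^+$ is not simply connected, so one must verify that the asymptotic triviality of $H_2^{(n)}/y^{d^n}$ really gives a global $d^n$-th root on $V_R^+$ and that the principal branch choice is compatible across levels; once these are secured, the geometric decay from the $1/d^{n+1}$ exponents makes uniform convergence essentially automatic.
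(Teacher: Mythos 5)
Your construction is essentially the same as the paper's: both define $\phi_H^+$ as the limit of a normalized $d^n$-th root of the second coordinate of $H^n$, and both control convergence via the logarithm of a telescoping product whose $n$-th factor is a $d^{n}$-th root of $1+O(|y|^{-1})$. Your $\phi_n^+ = c_H^{-s_n/d^n}(H_2^{(n)})^{1/d^n}$ is identical in the limit to the paper's $c_H^{-1/(d-1)}\lim_n y_n^{1/d^n}$ since $s_n/d^n \to 1/(d-1)$, and your functional-equation derivation via $s_{n+1}-s_n = d^n$ is the same computation the paper carries out with the recursion $(H^n)_2\circ H = c_H (H^n)_2^d(1+L)$. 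The one place you add genuine content is the explicit monodromy argument: you observe that $\pi_1(V_R^+)\cong\mathbb Z$ and that $H_2^{(n)}/y^{d^n}$ has trivial winding around the generating $y$-loop because it tends to the constant $c_H^{s_n}$ at infinity, so a global holomorphic $d^n$-th root exists. The paper silently handles this by only ever taking principal branches of $d^{n+1}$-th roots of quantities of the form $1+(\text{modulus }<1)$ and multiplying by the well-defined function $y$, so the issue never arises; your formulation makes the topological point explicit, which is a useful clarification, but at the cost of requiring you to verify compatibility of the branch of $\rho_n$ with the branch of $\rho_{n+1}$ (you flag this but do not carry it out—the paper's formulation via the ratio $y_{n+1}^{1/d^{n+1}}/y_n^{1/d^n}$ avoids it altogether since each factor is intrinsically well-defined). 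Net: correct, same approach, minor packaging difference.
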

\begin{proof}
Let $H^n(x,y)=({(H^{n})}_1(x,y),{(H^{n})}_2(x,y))$. Note that $y_n={(H^{n})}_2(x,y)$ is a polynomial in $x$ and $y$ of degree $d^{n}$.
Consider the telescoping product
\begin{equation}\label{tele}
y.\frac{{y_1}^{\frac{1}{d}}}{y}.\cdots.\frac{y_{n+1}^{\frac{1}{d^{n+1}}}}{y_{n}^{\frac{1}{d^{n}}}}\cdots
\end{equation}
which will be shown to converge. From (\ref{H1}), 
 
\begin{eqnarray}\label{bott}
\frac{y_{n+1}^{\frac{1}{d^{n+1}}}}{y_{n}^{\frac{1}{d^{n}}}}&=&\frac{{\left(c_H y_n^d+q(x_n,y_n)\right)}^{\frac{1}{d^{n+1}}}}{y_n^{\frac{1}{d^n}}}\nonumber\\
&=&{\left(c_H+\frac{q(x_n,y_n)}{y_n^d}\right)}^{\frac{1}{d^{n+1}}}\nonumber\\
&=& {c_H}^{\frac{1}{d^{n+1}}}{\left(1+\frac{q(x_n,y_n)}{c_H y_n^d}\right)}^{\frac{1}{d^{n+1}}}.
\end{eqnarray}

Choose $R>0$ sufficiently large so that 
\[
\left\lvert \frac{q(x_n,y_n)}{c_H y_n^d}\right\rvert <1
\]
for all $\lvert y\rvert >R$ and for all $n\geq 1$.
Let ${\left(1+\frac{q(x_n,y_n)}{c_H y_n^d}\right)}^{\frac{1}{d^{n+1}}}$ be the principal branch of the $d^{n+1}$-th root of 
$\left(1+\frac{q(x_n,y_n)}{c_H y_n^d}\right)$.

\medskip 
\no 
Now note that the convergence of the product
\[
y.\frac{{y_1}^{\frac{1}{d}}}{y}.\cdots.\frac{y_{n+1}^{\frac{1}{d^{n+1}}}}{y_{n}^{\frac{1}{d^{n}}}}\cdots.
\]
is equivalent to the convergence of the series
\begin{equation}\label{Log}
{\rm{Log}}\ y+ {\rm{Log}} \left(\frac{{y_1}^{\frac{1}{d}}}{y}\right)+\cdots + {\rm{Log}}\left(\frac{y_{n+1}^{\frac{1}{d^{n+1}}}}{y_{n}^{\frac{1}{d^{n}}}}\right)+\cdots.
\end{equation}
There exists $M>0$ (depending on the coefficients of the polynomial $q$) such that
\begin{equation}\label{zero}
\left\lvert\frac{q(x_n,y_n)}{y_n^d}\right\rvert\leq \frac{M}{\lvert y_n \rvert}\leq \frac{\tilde{M}}{{\lvert y\rvert}^{d^n}}
\end{equation}
for $R>0$ sufficiently large. Thus the series (\ref{Log}) converges and consequently the series (\ref{tele}) also converges. 

\medskip
\no 
The function
\[
\phi_H^+(x,y)={c_H}^{-\frac{1}{d-1}}\lim_{n\rightarrow \infty}y_{n}^{\frac{1}{d^{n}}}={c_H}^{-\frac{1}{d-1}}\left(y.\frac{{y_1}^{\frac{1}{d}}}{y}.\cdots.\frac{y_{n+1}^{\frac{1}{d^{n+1}}}}{y_{n}^{\frac{1}{d^{n}}}}\cdots\right)
\]
is clearly well-defined. Further, it follows from (\ref{bott}) and (\ref{zero}) that 
\[
\phi_H^+(x,y)\sim y
\]
as $\lVert (x,y)\rVert\rightarrow \infty$ in $V_R^+$.

\medskip 
\no 
Since
\begin{eqnarray*}
H^{n+1}(x,y)&=&H({(H^n)}_1(x,y), {(H^n)}_2(x,y))\\
&=&({(H^{n+1})}_1(x,y), c_H{(H^n)}_2^d(x,y)+q({(H^n)}_1(x,y),{(H^n)}_2(x,y))),
\end{eqnarray*}
it follows that
\begin{eqnarray*}
{(H^n)}_2(H(x,y))&=& 
c_H {(H^n)}_2^d(x,y)(1+L(x,y))
\end{eqnarray*}
where $L(x,y)\rightarrow 0$ as $\lVert (x,y)\rVert\rightarrow \infty$ in $V_R^+$.

\medskip 
\no
Therefore,
\begin{eqnarray*}
\phi_H^+(H(x,y))&=&{c_H}^{-\frac{1}{d-1}}\lim_{n\rightarrow \infty} (H^n)_2^{\frac{1}{d^n}}(H(x,y))\\
&=&c_H\lim_{n\rightarrow \infty} c_H^{\frac{1}{d^n}} {\left({c_H}^{-\frac{1}{d-1}}{(H^n)}_2^{\frac{1}{d^n}}(x,y){(1+L(x,y))}^{\frac{1}{d^{n+1}}}\right)}^d\\
&=&c_H {\left(\phi_H^+(x,y)\right)}^d.
\end{eqnarray*}

\medskip   
\no 
Consider $H^{-n}(x,y)=({(H^{-n})}_1(x,y),{(H^{-n})}_2(x,y))$ and note that $x_n={(H^{-n})}_1(x,y)$ is a polynomial in $x$ and $y$ of degree $d^{n}$.
As before, consider the telescoping product
\begin{equation*}
x.\frac{{x_1}^{\frac{1}{d}}}{x}.\cdots.\frac{x_{n+1}^{\frac{1}{d^{n+1}}}}{x_{n}^{\frac{1}{d^{n}}}}\cdots
\end{equation*}
which can be shown to be convergent. Thus, we can
define 
\[
\phi_H^-(x,y)={c_H'}^{-\frac{1}{d-1}}\lim_{n\rightarrow \infty}x_{n}^{\frac{1}{d^{n}}}.
\]
The properties of $\phi_H^-$ can be established as in the case of $\phi_H^+$.

\end{proof}
\no 
Since 
\[
G_H^+(x,y)=\lim_{n\rightarrow \infty}\frac{1}{d^n}\log\left \lVert{(H^n)}_2(x,y)\right\rVert
\]
in $V_R^+$, it follows that
\begin{equation}\label{Green1}
G_H^+=\log \lvert\phi_H^+\rvert+\frac{1}{d-1}\log \lvert c_H\rvert
\end{equation}
in $V_R^+$. Similarly
\begin{equation}\label{Green2}
G_H^-=\log \lvert\phi_H^-\rvert+\frac{1}{d-1}\log \left\lvert {c_H'}\right\rvert
\end{equation}
in $V_R^-$.

\begin{rem}
The uniqueness of $\phi_H^\pm$ follows immediately once we establish (\ref{Green1}) and (\ref{Green2}) along with their properties recorded in Proposition \ref{Bot}.
\end{rem}

\section{Proof of Theorem 1.1}

\no The first step is to show that
\begin{center}
{\it $F$ is a polynomial automorphism:}
\end{center}

\medskip

\no Since $F(K_H^\pm)=K_H^\pm$, it follows that $G_H^\pm\circ F=0$ on $K_H^\pm$. Fix $x \in \mathbb{C}$ and consider 
\[
g_{x}(y)=G_H^+\circ F (x,y)
\]
which is harmonic outside the compact set $K_H^+ \cap (\{x\}\times \mathbb{C})$. Since it is harmonic outside a large disk of radius $R>0$, $g_{x}$ has a harmonic conjugate $h_{x}$ in $\{\vert y \vert > R\}$ with period $c_{x}$. Therefore
\[
\psi_{x}(y)=g_{x}(y)-c_{x}\log \lvert y\rvert+ i h_{x}(y)
\]
is holomorphic in $\{\lvert y\rvert >R\}$. Further, note that 
\[
\left\lvert\exp(-\psi_{x}(y))\right\rvert \leq {\lvert y\rvert}^{c_{x}}
\]
which means that $\exp(-\psi_{x}(y))$ has at most a pole at infinity and consequently,
\[
\exp(-\psi_{x}(y))=y^k \exp f(y)
\]
where $f$ is a holomorphic function in $\{\lvert x\rvert >R\}$ having a removable singularity at infinity. Here $k = k_x$ is a positive integer. Taking absolute values and then log, we get 
\[
g_{x}(y)-c_{x}\log \lvert y \rvert=-k \log \lvert y \rvert-{\rm{Re}}(f(y))
\] 
in $\{\lvert y\rvert >R\}$. Therefore, 
\[
g_{x}(y)=b_{x}\log \lvert y\rvert +O(1) 
\]
in $\{\lvert y\rvert >R\}$ and since $g \ge 0$ everywhere,  
\[
g_{x}(y)=b_{x}\log^+ \lvert y\rvert +O(1) 
\]
in $\mathbb{C}$. Since the $O(1)$ term is bounded near infinity, $b_x$ is in fact 
the period of $g_x(y)$. 

\medskip

It turns out that $b_x$ is independent of $x$. To prove this, let us work in a small neighbourhood of a fixed $x_0$ and $R$ is large enough as before. Let $p, q$ be two distinct points near $x_0$ and let $I$ be the straight line segment joining them. Then 
\[
\Sigma = \{(x, y) : x \in I, \vert y \vert = R \}
\]
is a smooth real 2-surface with two boundary components namely, 
\[
\{(p, y) : \vert y \vert = R \} \cup \{ (q, y): \vert y \vert = R \}. 
\]
Thanks to Stokes' theorem,
\[
b_p - b_q = \int_{\partial \Sigma} d^c (G^+_H \circ F) = \int_{\Sigma} dd^c(G^+_H \circ F) = 0
\]
where the last equality holds due to the pluriharmonicity of $G^+_H \circ F$ on $V^+_R$. Hence $b_x$ is locally constant and therefore constant everywhere. Let us write $b_x = b^+$ for all $x \in \mathbb{C}$.

\medskip 

\no 
Now $g_{x_0}(y)=b^+ \log^+ \lvert y \rvert+ O(1)$ and on the other hand $G_H^+(x,y)=\log^+ \lvert y\rvert +O(1)$ in $V_R^+$ (See (3.4) in \cite{BS}). The difference $g_{x_0}(y)-b^+ G_H^+(y)$ is therefore harmonic at each $y$ for which $(x_0,y)\in \mathbb{C}^2\setminus K_H^+$ with a removable singularity at infinity and vanishes for $(x_0,y)\in K_H^+$. Thus $g_{x_0}(y)=b^+G_H^+(x_0,y)$ for each $y\in \mathbb{C}$. 

\medskip 
\no 
The same argument shows that $G_H^+\circ F-b^+G_H^+ \equiv 0$ in $\Delta(x_0;r_0)\times \mathbb{C}$. The difference is pluriharmonic in $\mathbb{C}^2\setminus K_H^+$ which is connected and it vanishes in $K_H^+$. Therefore, we have $G_H^+\circ F=b^+G_H^+$ in $\mathbb{C}^2$. The same proof applies to $G_H^- \circ F$.
Thus
\begin{equation}
G_H^\pm \circ F=b^\pm G_H^\pm 
\end{equation}
everywhere in $\mathbb{C}^2$.

\medskip 
\no 
It remains to show that $F$ has polynomial growth in $\mathbb{C}^2$. Let $(x,y)\in \mathbb{C}^2$ and suppose that $F(x,y)\in \overline{V_R\cup V_R^+}$. Then it follows from (\ref{L1}), (\ref{L2}) and (\ref{L3}) that 
\begin{equation*}
\log \lvert F_2 \rvert-C \leq G_H^+(F(x,y))=b^+ G_H^+(x,y)\leq b^+ (\max \{\log^+\lvert x \rvert, \log^+\lvert y \rvert\}+C).
\end{equation*}
Since $\lvert F_1\rvert\leq \lvert F_2\rvert+R$,  
\begin{equation}\label{Pol1}
\lvert F(x,y)\rvert \leq L \max \{{(\lvert x \rvert+1)}^{b^+}, {(\lvert y \rvert+1)}^{b^+}\}
\end{equation}
for some $L>1$. 

\medskip 
\no 
Similarly, if $F(x,y)\in V_R^-$, then
\begin{equation*}
\log \lvert F_1 \rvert-C \leq G_H^-(F(x,y))=b^- G_H^-(x,y)\leq b^- (\max \{\log^+\lvert x \rvert, \log^+\lvert y \rvert\}+A).
\end{equation*}
Therefore, using $\lvert F_1\rvert\leq \lvert F_2\rvert+R$, we obtain 
\begin{equation}\label{Pol2}
\lvert F(x,y)\rvert \leq L \max \{{(\lvert x \rvert+1)}^{b^+}, {(\lvert y \rvert+1)}^{b^+}\}
\end{equation}
for some $L>1$. 

\medskip
\no 
Combining (\ref{Pol1}) and (\ref{Pol2}), it follows that $F$ has polynomial growth throughout $\mathbb{C}^2$ and hence $F$ is itself a polynomial automorphism.

\medskip

\no {\it Remark:} Starting with the assumption that $F$ preserves $K^{\pm}_H$ (which are exactly the zero sets of $G^{\pm}_H)$, (3.1) shows that 
\[
F : \{G^{\pm}_H < \alpha\} \rightarrow \{G^{\pm}_H < b^{\pm} \alpha \} 
\]
is biholomorphic for every $\alpha > 0$. As mentioned earlier, sub-level sets of $G^{\pm}$ are examples of {\it Short} $\mathbb{C}^2$'s and hence every $F$ that preserves $K^{\pm}_H$ gives rise to a biholomorphism between a pair of {\it Short} $\mathbb{C}^2$'s. This biholomorphism is in fact a polynomial map.

\medskip

There are two possibilities depending on whether $\deg(F) = 1$ or $\deg(F) \ge 2$. We begin with the latter case.

\subsection*{Case 1:} Suppose that $\deg(F) \ge 2$.

\medskip 
\no 
Since $F$ is a polynomial automorphism, Jung's theorem (see \cite{J}) shows that $F$ can be written as a composition of affine and elementary maps in $\mathbb{C}^2$. Recall that an elementary map is of the form
\[
e(x, y) = (\alpha x + p(y), \beta y + \gamma)
\]
where $\alpha \beta \not= 0$ and $p(y)$ is a polynomial in $y$. Thus four cases arise:

\medskip
\no
{\it{Case (i)}}: Let
\[
F=a_1\circ e_1 \circ a_2 \circ e_2\circ \cdots \circ a_k \circ e_k
\]
for some $k\geq 1$ where the $a_i$'s are non-elementary affine maps and the 
$e_i$'s are non-affine elementary maps.
For simplicity, assume that
\[
F=a_1\circ e_1 \circ a_2 \circ e_2.
\]
Let 
\[
a_1(x,y)=(\alpha_1 x+ \beta_1 y+ \delta_1,\alpha_2 x+ \beta_2 y+ \delta_2 ).
\]
for $\alpha_2\neq 0$ and consider the maps
\[
a_1^2(x,y)=(\alpha_2 x+ \beta_2 y+ \delta_2, s_2y+r_2)
\]
and
\begin{equation}\label{aff}
a_1^1(x,y)=(bx+cy,y)
\end{equation}
where $b \not= 0, c=\alpha_1/\alpha_2$, $r_2=(\delta_1-c\delta_2)/b$ and $s_2=(\beta_1-c\beta_2)/b$. For any non-zero $b$, it turns out that $a_1= a_1^1\circ \tau\circ a_1^2$ where $\tau(x,y)=(y,x)$. Writing $a_2$ in a similar fashion, we get
\[
F=a_1^1\tau a_1^2 e_1 a_2^1\tau a_2^2 e_2.
\] 
Note that both $a_1^2 e_1 a_2^1$ and $a_2^2 e_2$ are elementary maps and we denote them by $E_1$ and $E_2$ respectively. Thus 
\[
F=a_1^1 \tau E_1 \tau E_2
\]
where $E_i(x,y)=(m_i x+p_i(y), k_i y+ r_i)$ and the $p_i$'s are polynomials in 
$y$ of degree at least $2$ for $i=1,2$.

\medskip
\no
Assume $c \neq 0$ in (\ref{aff}). Choose an unbounded sequence ${(x_n,y_n)}_{n\geq 1}\subseteq K_H^-\cap V_R^+$. Since both $\tau E_1$ and $\tau E_2$ are H\'{e}non maps, it follows that $(x_n',y_n')=\tau E_1 \tau E_2(x_n, y_n)\in V_R^+$ for all $n\geq 1$. Here and in what follows, $R$ may have to be increased finitely many times so that the filtration works for all H\'{e}non maps that will be encountered. Further, the sequence ${(x_n',y_n')}_{n\geq 1}$ is   unbounded. Take $\lvert b\rvert$ sufficiently small so that $\lvert c\rvert > \lvert b\rvert >0$ and thus
\[
\lvert b x_n'+cy_n'\rvert \geq (\lvert c\rvert-\lvert b\rvert)\lvert y_n'\rvert,
\]
which implies 
\begin{equation}\label{a}
\lvert x_n''\rvert \geq (\lvert c\rvert-\lvert b\rvert)\lvert y_n''\rvert
\end{equation}
where $(x_n'',y_n'')=a_1^1(x_n',y_n')=(b x_n'+ c y_n', y_n')$. Since $(x_n,y_n)\in K_H^-\cap V_R^+$ and 
\[
F(K_H^-)=K_H^-\subseteq V_R \cup V_R^+
\]
it follows that $\Vert (x_n'',y_n'') \Vert \rightarrow \infty$ as $n\rightarrow \infty$ in $V_R^+$. Since $\overline{K_H^-}=K_H^-\cup I^-$ in $\mathbb{P}^2$ (see \cite{DS}), it follows that $(x_n'', y_n'')$ approaches the indeterminacy point $I^-$ as $n\rightarrow \infty$. Consequently, 
\begin{equation}\label{b}
\frac{\lvert x_n''\rvert}{\lvert y_n''\rvert}\rightarrow 0
\end{equation} 
as $n\rightarrow \infty$. Comparing (3.5) and (\ref{b}), $c = 0$. Hence $F$ is a H\'{e}non map.

\medskip
\no 
{\it{Case (ii):}} Let 
\[
F=a_1\circ e_1\circ a_2 \circ\cdots\circ e_{k-1} \circ a_k
\]
for some $k\geq 2$.
For simplicity, assume that $F=a_1\circ e_1 \circ a_2$. As in the previous case we can write
\[
F=a_1^1 \tau a_1^2 e_1 a_2^1 \tau a_2^2
\] 
where $a_1^1(x,y)=(bx+cy,y)$ and $ \tau a_2^2(x,y)=(s_2 y+r_2, \alpha_2 x+ \beta_2 y+ \delta_2)$.

\medskip
\no 
{\it{Claim}:} There exists a sequence ${(x_n,y_n)}_{n \geq 1}\subseteq K_H^+ \cap V_R^-$ with $\lvert x_n \rvert \geq \lvert y_n\rvert\geq n$ for all $n\geq 1$

\medskip
\no
Suppose no such sequence exists. Choose $(x_n,y_n)\in K_H^+\cap V_R^-$ such that  $\lvert y_n \rvert$ is bounded by a fixed constant $M>1$ for all $n\geq 1$  and $\lvert x_n \rvert \rightarrow \infty$ as $n\rightarrow \infty$; of course, $M$ depends on the sequence $(x_n, y_n)$. Without loss of generality, let the H\'{e}non map $H:(x,y)\mapsto (y, p(y)-\delta x)$ with $\delta\neq 0$. Then there exists a subsequence $\{(x_{n_k}, y_{n_k})\}\subset  K_H^+ \cap V_R^-$  such that $\{H(x_{n_k}, y_{n_k})\}\subset  K_H^+ \cap V_R^-$ and consequently,
\[
\lvert y_{n_k} \rvert \geq \lvert p(y_{n_k})-\delta x_{n_k}\rvert  \geq \lvert \delta\rvert\lvert x_{n_k} \rvert-\lvert p(y_{n_k})\rvert.
\]
Since $\lvert y_{n_k}\rvert \leq M$ for all $k\geq 1$, the sequence $\{x_{n_k}\}$ also turns out to be bounded which is a contradiction.

\medskip
\no 
Since $\overline{K_H^+}=K_H^+ \cup I^+$ in $\mathbb{P}^2$, 
\[
\frac{\vert y_n \vert}{\vert x_n \vert}\rightarrow 0 \text{ as } n\rightarrow \infty 
\]
and thus 
\begin{equation}\label{eps}
\lvert y_n \rvert \leq \epsilon_n\lvert x_n\rvert
\end{equation}
for all $n\geq 1$ with $\epsilon_n \rightarrow 0$.

\medskip
\no
Now $\tau a_2^2(x_n,y_n)=(s_2 y_n+r_2, \alpha_2 x_n + \beta_2 y_n + \delta_2)$ and
\begin{eqnarray}
\lvert \alpha_2 x_n + \beta_2 y_n +\delta_2\rvert &\geq& \lvert \alpha_2\rvert \lvert x_n\rvert-\lvert\beta_2\rvert \lvert y_n\rvert-\lvert \delta_2\rvert \nonumber\\
&\geq& (\lvert \alpha_2 \rvert - \epsilon_n \lvert \beta_2\rvert)\lvert x_n\rvert-\lvert\delta_2\rvert \nonumber\\
&\geq& \frac{1}{2}\lvert \alpha_2 \rvert  \lvert x_n \rvert-\lvert \delta_2 \rvert  \nonumber\\
&\geq& \frac{1}{2} \lvert \alpha_2\rvert \lvert y_n \rvert-\lvert \delta_2 \rvert \geq \lvert s_2\rvert \lvert y_n\rvert + \lvert r_2\rvert \lvert y_n\rvert-\lvert \delta_2\rvert  \nonumber
\end{eqnarray}
for all  $n\geq n_0$. The last inequality follows since for $\lvert s_2\rvert$ and $\lvert r_2\rvert$ sufficiently small, one can choose ${\lvert\alpha_2\rvert}/{2}\geq\lvert s_2\rvert+ \lvert r_2\rvert$

\medskip 
\no 
Since $\lvert y_n\rvert \rightarrow \infty$ as $n\rightarrow \infty$, we get
\[
\lvert \alpha_2 x_n + \beta_2 y_n +\delta_2\rvert \geq \lvert s_2 \rvert \lvert y_n\rvert + \lvert r_2\rvert \geq \lvert s_2 y_n + r_2\rvert
\]
and 
\[
\lvert \alpha_2 x_n + \beta_2 y_n +\delta_2\rvert \geq R
\]
for sufficiently large $n$.

\medskip 
\no 
Thus for a sequence ${(x_n,y_n)}_{n\geq 1}\subseteq K_H^+\cap V_R^-$ with $\lvert x_n \rvert \geq \lvert y_n \rvert \geq n$, it turns out that $\tau a_2^2 (x_n,y_n)\in V_R^+$ for sufficiently large $n$. Then 
\[
(x_n',y_n')= \tau a_1^2 e_1 a_2^1 \tau a_2^2 (x_n,y_n)\in V_R^+
\]
and consequently
\[
\lvert b x_n'+c y_n'\rvert\leq (\lvert b\rvert+\lvert c\rvert)\lvert y_n'\rvert.
\]  
Hence
\begin{equation}\label{contra}
\lvert y_n''\rvert \geq \frac{1}{(\lvert b \rvert+ \lvert c \rvert)} \lvert x_n''\rvert
\end{equation}
where $(x_n'',y_n'')=a_1^1(x_n',y_n')$.

\medskip 
\no 
Now since $F(K_H^+)=K_H^+$, 
\[
(x_n'',y_n'')=F(x_n,y_n)\in K_H^+\cap V_R^-
\]
for sufficiently large $n\geq 1$ and $\Vert (x_n'',y_n'')\Vert \rightarrow \infty$ as $n\rightarrow \infty$. By (\ref{eps}), it follows that 
\[
\lvert y_n''\rvert \leq \epsilon_n \lvert x_n''\rvert
\]
where $\epsilon_n\rightarrow 0$ as $n\rightarrow \infty$ which clearly contradicts (\ref{contra}). Thus $F$ cannot be of this form.

\medskip
\no 
{\it{Case (iii):}} Let 
\[
F=e_1\circ a_1 \circ e_2 \circ a_2 \circ \cdots \circ e_k \circ a_k
\]
for some $k\geq 1$. Note that $F^{-1}$ has a form as in Case 1. Since $F^{-1}$ also keeps $K_H^{\pm}$ invariant, it follows that $F^{-1}$ is a H\'{e}non map.

\medskip
\no 
{\it{Case (iv):}} Let 
\[
F=e_1\circ a_1 \circ e_2 \circ a_2 \circ \cdots\circ a_{k-1}\circ e_{k-1} \circ e_k 
\]
for some $k\geq 1$. For simplicity, let us work with 
\[
F=e_1\circ a_1 \circ e_2.
\]
As in the previous cases, we can write
\[
F=e_1 a_1^1\tau a_1^2 e_2
\]
and consequently,
\[
\tau F=\tau e_1 a_1^1\tau a_1^2 e_2.
\]
Note that both $\tau e_1 a_1^1$ and $\tau a_1^2 e_2$ are H\'{e}non maps. 

\medskip 
\no 
Let $(x,y)\in K_H^- \cap V_R^+$, then $\tau F(x,y)\in K_H^- \cap V_R^+$. Thus $F(x,y)\notin K_H^-$ contradicting the hypothesis. Therefore, $F$ cannot be of this form.

\medskip 
\no 
Finally, the conclusion is that if $F(K_H^{\pm})=K_H^\pm$ for some automorphism $F$ in $\mathbb{C}^2$, then either $F$ or $F^{-1}$ must be a  H\'{e}non map. For the sake of definiteness, we assume that $F$ is a H\'{e}non map.

\medskip 

\begin{center}
{\it The Green's functions of $H$ and $F$ coincide:}
\end{center}
\medskip 
\no  
Now $F(K_H^\pm)=K_H^\pm$ and consequently, $F^n(K_H^+)=K_H^+ \subseteq V_R \cup V_R^-$ for all $n\geq 1$.  Since the forward orbit of any $z\in \mathbb{C}^2\setminus K_F^-$ (See (\ref{escape})) eventually enters in $V_R^+$ and converges to the point at infinity $I^-$, we see that
\[
K_H^+\subseteq K_F^+.
\]
A rigidity theorem of Dinh-Sibony (\cite{DS}) asserts that the $dd^c$-closed currents $\mu_H^+$ and $\mu_F^+$ which are both supported on $K^+_F$ must agree up to a multiplicative constant. Therefore,  
\[ 
J_H^+ = \text{supp} \; \mu_H^+=\text{supp} \; \mu_F^+ = J_F^+.
\]
Hence, the pluricomplex Green functions of these sets (namely, $J_H^+$ and $J_F^+$) are the same, i.e., 
\[
G_H^+=G_F^+
\]
and in the same vein, it can be shown that
\[
G_H^-=G_F^-
\]
in $\mathbb{C}^2$.

\medskip 
\no 
Now by (\ref{Green1}) 
\begin{equation*}
G_H^+=\log \lvert \phi_H^+\rvert+\frac{1}{d_H-1}\log \lvert c_H \rvert \text{ and } G_F^+=\log \lvert \phi_F^+\rvert +\frac{1}{d_F-1}\log \lvert c_F\rvert.
\end{equation*}
on $V_R^+$.

\medskip 
\no  
Since $\phi_H^+$ and $\phi_F^+$ are both asymptotic to $y$ as $\lVert(x,y)\rVert\rightarrow \infty$ in $V_R^+$, it follows that 
\begin{equation}\label{const}
\frac{1}{d_H-1}\log \lvert c_H \rvert=\frac{1}{d_F-1}\log \lvert c_F \rvert
\end{equation}
and consequently 
\begin{equation*}\label{Phi+}
\phi_H^+ \equiv \phi_F^+
\end{equation*}
in $V_R^+$. Similarly,
\begin{equation*}\label{Phi-}
\phi_H^- \equiv \phi_F^-
\end{equation*}
in $V_R^-$. From now on we shall write $\phi^\pm$ for $\phi_H^\pm \equiv \phi_F^\pm$.

\medskip 
\no 
By (\ref{const}),
\[
c_H^{d_F}c_F=c_F^{d_H}c_H\delta_+ 
\]
for some $\delta_+$ with $\lvert \delta_+\rvert=1$.

\medskip 
\no 
By Proposition \ref{Bot},  
\begin{equation*}
\phi^+ \circ F \circ H(x,y)=c_F{(\phi^+ \circ H (x,y))}^{d_F}=c_F c_H^{d_F}{(\phi^+(x,y))}^{d_H d_F}
\end{equation*}
and similarly,
\begin{equation*}
\phi^+ \circ H \circ F(x,y)=c_H{(\phi^+ \circ F (x,y))}^{d_H}=c_H c_F^{d_H}{(\phi^+(x,y))}^{d_H d_F}.
\end{equation*}
Therefore, 
\begin{equation*}\label{equality}
\phi^+ ( F \circ H) =\delta_+ \phi^+ ( H \circ F)
\end{equation*}
on $V_R^+$. 
Since
\begin{equation*}
\phi^+ \circ F \circ H(x,y)\sim {(F\circ H)}_2(x,y)  
\end{equation*}
and 
\begin{equation*}
\phi^+ \circ H \circ F(x,y)\sim {(H\circ F)}_2(x,y)  
\end{equation*}
as $\lVert(x,y)\rVert\rightarrow \infty$ in $V_R^+$, it follows that for a fix $x_0 \in \mathbb{C}$, 
\[
{(F\circ H)}_2(x_0,y)- \delta_+{(H\circ F)}_2(x_0,y)\sim 0 \text{ as } \lvert y \rvert \rightarrow \infty.
\]
The expression on the left is a polynomial in $y$ and hence
\[
{(F\circ H)}_2(x_0,y)=\delta_+{(H\circ F)}_2(x_0,y)
\]
for all $y\in \mathbb{C}$. Therefore,
\begin{equation}\label{rel1}
{(F\circ H)}_2 \equiv \delta_+{(H\circ F)}_2
\end{equation}
in $\mathbb{C}^2$. To study the first components of these maps, we again appeal to Proposition \ref{Bot} which shows that 
\begin{equation*}
(\phi^-\circ H(x,y))={\left(\frac{1}{c_H'}\right)}^{\frac{1}{d_H}}{\left(\phi^-(x,y)\right)}^{\frac{1}{d_H}}
\end{equation*}
for $(x,y)\in H^{-1}(V_R^-)$ 
and 
\begin{equation*}
(\phi^-\circ F(x,y))={\left(\frac{1}{c_F'}\right)}^{\frac{1}{d_F}}{\left(\phi^-(x,y)\right)}^{\frac{1}{d_F}}
\end{equation*}
for $(x,y)\in F^{-1}(V_R^-)$, where principal roots are being taken. Thus 
\begin{equation*}
\phi^-\circ H \circ F(x,y)={\left( \frac{1}{c_H'}\right)}^{\frac{1}{d_H}}{\left( \frac{1}{c_F'}\right)}^{\frac{1}{d_H d_F}}{\left( \phi^-(x,y)\right)}^{\frac{1}{d_H d_F}}
\end{equation*}
and 
\begin{equation*}
\phi^-\circ F \circ H(x,y)={\left( \frac{1}{c_F'}\right)}^{\frac{1}{d_F}}{\left( \frac{1}{c_H'}\right)}^{\frac{1}{d_H d_F}}{\left( \phi^-(x,y)\right)}^{\frac{1}{d_H d_F}}
\end{equation*}
for all $(x,y)\in {(H\circ F)}^{-1}(V_R^-)\cap {(F\circ H)}^{-1}(V_R^-)=U $, say. Note that $U$ is an non-empty open neighborhood of $I^+=[1:0:0]$ in $\mathbb{P}^2$.

\medskip 
\no 
Thus
\[
\phi^-\circ (H\circ F)=\delta_- (F\circ H)
\]
for all $z\in U$ where
\begin{equation*}
{\left(\frac{1}{c_H'}\right)}^{\frac{1}{d_H}}{\left(\frac{1}{c_F'}\right)}^{\frac{1}{d_H d_F}}=\delta_- {\left(\frac{1}{c_F'}\right)}^{\frac{1}{d_F}}{\left(\frac{1}{c_H'}\right)}^{\frac{1}{d_H d_F}}. 
\end{equation*}
Therefore,
\begin{equation*}\label{delta1}
{(c_H')}^{d_F}(c_F'){(\delta_-)}^{d_H d_F} = {(c_F')}^{d_H}(c_H').
\end{equation*} 
By (\ref{Green2}), it follows that
\begin{equation*}
\lvert \delta_-\rvert =1
\end{equation*}
as in the previous case.

\medskip 
\no 
Now for a fixed $c\neq 0$, there exists $\epsilon>0$ sufficiently small such that
\[
U_{\epsilon,c}=\{[{1}/{y}:c:1]:0\neq \lvert y\rvert <\epsilon\}
\]
is contained inside $U\subset V_R^-$ with $I^+$ as a boundary point. Therefore, we can choose a sequence $[x_n:c:1]\in U_{\epsilon,c}$ with $x_n\rightarrow \infty$. Because $(F\circ H)(x_n,c), (H\circ F)(x_n,c)\in V_R^-$ as a consequence of  $(x_n,c)$ being in $U$, we have  
$$
{(F\circ H)}_1(x_n,c),{(H\circ F)}_1(x_n,c)\rightarrow \infty
$$
as $n\rightarrow \infty$. 

\medskip
\no 
Now since $\phi^-(x,y)\sim x$ as $\lVert(x,y)\rVert\rightarrow \infty$,  
\begin{equation*}
{(H\circ F)}_1(x_n,c)-\delta_-{(F\circ H)}_1(x_n,c)\rightarrow 0
\end{equation*} 
as $n\rightarrow \infty$. The expression on the left is a polynomial in $x$ for each fixed $c$ and thus
\begin{equation*}
{(H\circ F)}_1(x,c)=\delta_-{(F\circ H)}_1(x,c)
\end{equation*}
for all $x\in \mathbb{C}$. Thus using the same argument as in the previous case, we get
\begin{equation}\label{rel2}
{(F\circ H)}_1 \equiv \delta_-{(H\circ F)}_1
\end{equation}
in $\mathbb{C}^2$.

\medskip 
\no 
Hence using (\ref{rel1}) and (\ref{rel2}), we get
\[
F\circ H= C\circ H\circ F
\]
where $C(x, y) = (\delta_- x,\delta_+ y)$ with $\lvert \delta_\pm\rvert=1$.

\medskip 
\no 
Similarly, if $F^{-1}$ is a H\'{e}non map, then
\[
F^{-1}\circ H= C\circ H\circ F^{-1}
\]
with $C(x, y) = (\delta_- x,\delta_+ y)$, $\lvert \delta_{\pm} \vert = 1$ as above.

\medskip 
\begin{center} 
{\it Some iterates of $F$ and $H$ agree upto an affine automorphism:}
\end{center}
 \medskip 
 \no 
Let $d_F, d_H$ be the degrees of $F, H$ respectively. We use the following fact (See Theorem 1.5, \cite{Bi}): {\it If for $m, n \in \mathbb{N}$, the condition that $d_F^m\leq d_H^n$ implies that $d_F^m$ divides $d_H^n$, then there
exist $n_0,m_0 \in \mathbb{N}$ such that $d_F^{m_0}=d_H^{n_0}$.}

\medskip
\noindent

Suppose that $d_F^m \le d_H^n$ for some $m, n \in \mathbb{N}$. Then
\[
L=H^n \circ F^{-m} 
\]
satisfies $L(K_H^\pm)=K_H^\pm$. By the aforementioned arguments, either $L$ or $L^{-1}$ is a H\'{e}non map. We claim that $L^{-1}$ cannot be a H\'{e}non map for if it were, then
\begin{equation}\label{LH}
G^{\pm}_{L^{-1}}=G_H^\pm
\end{equation}
and
\begin{equation*}
G^+_{L^{-1}} \circ L^{-1}= d_{L^{-1}} G^+_{L^{-1}}
\end{equation*}
where $d_{L^{-1}}$ is the degree of $L^{-1}$. Using (\ref{LH}) and the fact that $G^{\pm}_H = G^{\pm}_F$, 
\begin{equation*}
G^+_{L^{-1}} \circ L^{-1} = G_H^+ \circ F^m \circ H^{-n}=\frac{d_F^m}{d_H^n}G_H^+
\end{equation*}
in $\mathbb{C}^2$. Thus 
\[
d_{L^{-1}} = \frac{d_F^m}{d_H^n} \le
 1
\]
which is a contradiction. Hence $L$ is a H\'{e}non map. Let $d_L$ be the degree of $L$. The same reasoning applied to $G^{\pm}_L$ now shows that
\[
d_L = \frac{d_H^n}{d_F^m}
\]
and hence $d_F^m$ divides $d_H^n$. Thus there exist $m_0, n_0 \in \mathbb{N}$ such that $d_F^{m_0}=d_H^{n_0}$.  

\medskip

Let $L_{m_0, n_0} = F^{m_0} \circ H^{-n_0}$. Now two cases arise: the degree of $L_{m_0, n_0}$ is either $1$ or at least $2$. In the former case, there is nothing to prove. In the latter case, either $L_{m_0, n_0}$ or its inverse is a H\'{e}non map. If $L_{m_0, n_0}$ is H\'{e}non, then  
\[
\deg(F^{m_0}) = \deg(L_{m_0, n_0}) \cdot \deg(H^{n_0})
\]
shows that the degree of $L_{m_0, n_0}$ is $1$. If the inverse of $L_{m_0, n_0}$ is H\'{e}non, then running the same argument with
\[
H^{n_0} = L^{-1}_{m_0, n_0} \circ F^{m_0}
\]
shows that the degree of $L_{m_0, n_0}$ is $1$ again. In all cases then, $F^{m_0} = \sigma \circ H^{n_0}$ for some affine automorphism $\sigma$.
\medskip

\no \subsection*{Case 2:} Suppose that $\deg(F) = 1$.

\medskip 
\no 
Let $F(x,y)=(cx+ay+d, bx+ey+f)$. Now $F(K_H^\pm)=K_H^\pm$, thus $(F\circ H)(K_H^\pm)=K_H^\pm$.  Since $\deg(F\circ H)\geq 2$, either $(F\circ H)$ or $H^{-1}\circ F^{-1}$ is H\'enon. A degree count shows that  $H^{-1}\circ F^{-1}$ is never a H\'{e}non map. Thus, $(F\circ H)$ is a H\'{e}non map and consequently $a=0$. Now $F$ extends holomorphically to $\mathbb{P}^2$ and $F([1:0:0])=[1:0:0]$. Thus $b=0$ and consequently, $F(x,y)=(cx+d, ey+f)$.
 
\medskip 
 
\begin{center}
{\it{There exists a H\'{e}non map $R$ which generates every $F$ that preserves $K^{\pm}_H$}:}
\end{center}
\medskip 
\no 
Suppose $F(K_H^\pm)=K_H^\pm$ and $F$ is not affine. Without loss of generality, assume $F$ to be a H\'{e}non map. Let $R$ be a H\'{e}non map such that $R(K_H^\pm)=K_H^\pm$ and $\deg(R)\leq \deg(F)$ for all $F$ (non-affine) which preserve $K_H^\pm$. Now since $$
J_H^\pm=J_F^\pm=J_R^\pm,
$$ 
applying a same arguments used previously, it follows that  there exist $m_0$ and $n_0$ such that 
\[
F^{m_0}=\gamma_F R^{n_0}
\]
for some affine automorphism $\gamma_F$. Hence  ${\deg(F)}^{m_0}={\deg(R)}^{n_0}$  and since $\deg(F)\geq \deg(R)$, it follows that $n_0\geq m_0$.  Thus $\deg(R)$ devides $\deg(F)$. Let $\deg(F)=r_F\deg(R) $. consider the automorphism 
\[
\sigma_F=F\circ R^{-r_F }.
\]
Clearly, $\sigma_F (K_H^\pm)=K_H^\pm$. Therefore $\sigma_F$ turns out to be affine using the same set of arguments as in the previous case. Thus 
\[
F=\sigma_F \circ R^{r_F}.
\]

\medskip 
\no 
For the converse, first note that $C\circ H$ is also a H\'{e}non map. Since 
\[
C\circ H(K_H^+)=K_H^+,
\]
it follows that $K_H^+ \subset K_{(C\circ H)}^+$.  Also,
\[
H(K_{(C\circ H)}^+)=C^{-1}(K_{(C\circ H)}^+)
\]
which in turn gives $K_{(C\circ H)}^+ \subset K_H^+$. Thus 
\begin{equation*}
K_{(C\circ H)}^+=K_H^+. 
\end{equation*}
\medskip 
\no 
Since $F\circ H=C\circ H \circ F$, it follows that 
\[
(C\circ H)(F(K_H^+))=F(K_H^+).
\]
Thus 
\begin{equation}\label{1}
F(K_H^+)\subset K_{(C\circ H)}^+=K_H^+ .
\end{equation}
Indeed, if not, then there exists $F(p)\in F(K_H^+)$ such that $F(p)\notin K_{(C\circ H)}^+$. Therefore, 
\[
\Vert {(C\circ H)}^n (F(p)) \Vert \rightarrow \infty
\]
in $V_R^+$. Also, ${(C\circ H)}^n (F(p))=F(p_n)$ with $p_n\in K_H^+$ for all $n\geq 1$ and $\Vert p_n \Vert \rightarrow \infty$ in $V_R^-$. Thus, on the one hand, $F(p_n) \in K_H^+ \cap V_R^+$ for all $n \ge 1$ and on the other, $\Vert F(p_n) \Vert \rightarrow \infty$. This is not possible since $K^+_H \subset V_R \cup V_R^-$.

\medskip 
\no 
Further, since $F\circ H=C\circ H \circ F$, it follows that 
\[
H(F^{-1}(K_H^+))=F^{-1}(K_H^+)
\]
and therefore,
$F^{-1}(K_H^+)\subset K_H^+$, i.e., 
\begin{equation}\label{2}
K_H^+\subset F(K_H^+).
\end{equation}
Combining (\ref{1})and (\ref{2}), we get that $F(K_H^+)=K_H^+$. Similarly, it follows that $F(K_H^-)=K_H^-$. Thus 
\[
F(K_H^\pm)=K_H^\pm
\]
and this completes the proof of Theorem 1.1

\section{Two remarks on \short{2}'s}

\no In this section, we will (i) show the existence of a continuum of biholomorphically non--equivalent \short{2}'s and (ii) demonstrate the existence of \short{2}'s that are not Reinhardt and not even biholomorphic to Reinhardt domains.

\medskip

\no We begin with the first theme. Let $H_1$ and $H_2$ be hyperbolic H\'{e}non maps with only one attracting periodic point at the origin. Then ${\rm int}(K_{H_1}^+)$ and ${\rm int}(K_{H_2}^+)$ are Fatou--Bieberbach domains containing the origin. For $c > 0$, the sub-level sets 
\[ 
\Om_1^c=\{z \in \mbb C^2: G_{H_1}^+(z)< c\}
\]
and
\[ 
\Om_2^c=\{z \in \mbb C^2: G_{H_2}^+(z)<c\}
\]
are \short{2}'s and it is evident that ${\rm int}(K_{H_i}^+) \subset \Om_i^c$ for $i = 1, 2$.

\begin{prop}\label{biholomorphism}
Fix $c > 0$ and suppose that $\phi : \Om_1^c \rightarrow \Om_2^c$ is a biholomorphism. Then $\phi(K_{H_1}^+)=K_{H_2}^+.$
\end{prop}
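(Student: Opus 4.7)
\medskip

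\no \emph{Proof plan.} The strategy is to show that $\phi$ sends ${\rm int}(K_{H_1}^+)$ onto ${\rm int}(K_{H_2}^+)$, and then to upgrade this to an equality on the full non-escaping sets by a closure argument. The decisive structural input is that, under the hyperbolicity hypothesis with a unique attracting periodic point at the origin, ${\rm int}(K_{H_i}^+)$ is the basin of attraction of $0$, which is a Fatou--Bieberbach domain biholomorphic to $\mbb C^2$.

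\medskip

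\no Consider $v = G_{H_2}^+ \circ \phi$ on $\Om_1^c$. Since $\phi$ is holomorphic and $G_{H_2}^+$ is plurisubharmonic on $\mbb C^2$, $v$ is plurisubharmonic on $\Om_1^c$, non-negative, and strictly bounded above by $c$ (because $\phi$ maps into $\Om_2^c = \{G_{H_2}^+ < c\}$). Precomposing the restriction $v|_{{\rm int}(K_{H_1}^+)}$ with a biholomorphism $\mbb C^2 \to {\rm int}(K_{H_1}^+)$ produces a plurisubharmonic function on all of $\mbb C^2$ that is bounded above. By the plurisubharmonic analogue of Liouville's theorem, this pull-back is constant, so $v \equiv \alpha$ on ${\rm int}(K_{H_1}^+)$ for some $\alpha \in [0, c)$.

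\medskip

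\no I would rule out $\alpha > 0$ as follows. If $\alpha > 0$, then the open set $\phi({\rm int}(K_{H_1}^+))$ would lie entirely in the level set $\{G_{H_2}^+ = \alpha\}$. But on $\mbb C^2 \sm K_{H_2}^+$ the function $G_{H_2}^+$ is pluriharmonic; since it has logarithmic growth at infinity, the maximum principle forces every connected component of $\mbb C^2 \sm K_{H_2}^+$ to be unbounded, and pluriharmonic unique continuation then shows that $G_{H_2}^+$ cannot be constant on any such component. Hence every positive level set of $G_{H_2}^+$ has empty interior, a contradiction. Therefore $\alpha = 0$, so $\phi({\rm int}(K_{H_1}^+)) \subset K_{H_2}^+$ and, being open, $\phi({\rm int}(K_{H_1}^+)) \subset {\rm int}(K_{H_2}^+)$. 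The same argument applied to $\phi^{-1}$ gives the reverse inclusion, yielding $\phi({\rm int}(K_{H_1}^+)) = {\rm int}(K_{H_2}^+)$.

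\medskip

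\no To pass from interiors to the full non-escaping sets, I would invoke the hyperbolicity once more: for a hyperbolic H\'enon map with a unique attracting periodic point at $0$, one has $K_{H_i}^+ = \overline{{\rm int}(K_{H_i}^+)}$, since $J_{H_i}^+ = \partial K_{H_i}^+$ coincides with the boundary of the basin of $0$. As $K_{H_i}^+ \subset \Om_i^c$, the closure of ${\rm int}(K_{H_i}^+)$ taken inside $\Om_i^c$ equals $K_{H_i}^+$. Because $\phi$ is a homeomorphism of $\Om_1^c$ onto $\Om_2^c$, it carries closures in $\Om_1^c$ to closures in $\Om_2^c$, and we obtain $\phi(K_{H_1}^+) = K_{H_2}^+$. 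The main obstacle in the plan is the clean application of PSH Liouville on the Fatou--Bieberbach basin together with excluding a positive constant value; once those are in place, the closure step is routine bookkeeping.
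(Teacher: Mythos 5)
Your argument follows the paper's proof essentially step for step: apply the plurisubharmonic Liouville theorem on the Fatou--Bieberbach domain $\phi(\Omega_1)$ to conclude $G_{H_2}^+\equiv\alpha$ there, rule out $\alpha>0$ by pluriharmonicity on $\mathbb{C}^2\setminus K_{H_2}^+$, apply the same reasoning to $\phi^{-1}$, and pass to closures using $K^+_{H_i}=\overline{\rm int\,}K^+_{H_i}$. The only noteworthy difference is cosmetic: you spell out the final closure step (which the paper elides) more explicitly, while your argument that $\alpha>0$ is impossible is a little more circuitous than the paper's one-line appeal to connectivity of $\mathbb{C}^2\setminus K_{H_2}^+$, but both are correct.
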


\begin{proof}
Let $\Om_1= {\rm int}(K_{H_1}^+)$ and $\Om_2= {\rm int}(K_{H_2}^+)$ be the Fatou--Bieberbach domains containing the origin corresponding to the maps $H_1$ and $H_2$. Also,
\begin{align}\label{Julia}
J_1^+=\partial \Om_1=\partial K_{H_1}^+ \text{ and }J_2^+= \partial \Om_2=\partial K_{H_2}^+.
\end{align}

\no Note that $\phi(\Om_1)$ is Fatou--Bieberbach domain in $\Om_2^c$. Since $G_{H_2}^+$ is bounded above (by $c$) on it, it follows that $G_{H_2}^+$ must be constant there. Suppose that $G_{H_2}^+ \equiv \alpha$ on $\phi(\Om_1)$. Now suppose there exists $z \in \phi(\Om_1) \setminus K_{H_2^+}$ such that $G_{H_2}^+(z)=\alpha >0$. Since $K_{H_2}^+$ is closed, there exists a ball contained in 
$\phi(\Om_1) \setminus K_{H_2^+}$ on which $G_{H_2}^+ \equiv \alpha$. This is not possible since $G_{H_2}^+$ is a non--constant pluriharmonic function on $\mbb C^2 \setminus K_{H_2}^+$. Hence $\phi(K_{H_1}^+)  \subset K_{H_2}^+.$ A similar argument applied to $\phi^{-1}$ shows that $\phi^{-1}(K_{H_2}^+) \subset K_{H_1}^+$ and hence $\phi(K_{H_1}^+)=K_{H_2}^+$. From \ref{Julia}, we also see that 
$\phi(\Om_1)=\Om_2.$
\end{proof}

\begin{thm}\label{continuum}
There exists a continuum of biholomorphically non--equivalent \short{2}'s. 
\end{thm}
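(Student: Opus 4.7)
The plan is to use the Hausdorff dimension of the forward Julia set $J_H^+$ as a biholomorphic invariant of the \short{2} $\Om_H^c=\{G_H^+<c\}$, and to exhibit a one-parameter family of hyperbolic H\'enon maps on which this invariant assumes uncountably many values. Recall first that $K_H^+$ is compact: from the logarithmic lower bounds on $G_H^+$ in $V_R^{\pm}$ (see (\ref{L1})--(\ref{L3})), $K_H^+\subset V_R\cup(V_R^-\cap\{\abs{x}\le e^C\})$ is bounded, so $J_H^+=\pa K_H^+$ is a compact subset of $\Om_H^c$.

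Step one is a short rigidity-plus-regularity argument. Suppose $\phi:\Om_{H_1}^c\to \Om_{H_2}^c$ is a biholomorphism. By Proposition \ref{biholomorphism}, $\phi(K_{H_1}^+)=K_{H_2}^+$, and since $\phi$ is a homeomorphism, taking topological boundaries gives $\phi(J_{H_1}^+)=J_{H_2}^+$. Any biholomorphism between open subsets of $\mbb C^2$ is smooth with smooth inverse, hence locally bi-Lipschitz, and such maps preserve Hausdorff dimension of subsets. Therefore $\dim_H J_{H_1}^+=\dim_H J_{H_2}^+$ is a necessary condition for biholomorphic equivalence of $\Om_{H_1}^c$ and $\Om_{H_2}^c$.

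Step two is to construct a continuum of hyperbolic H\'enon maps $\{H_t\}_{t\in I}$, each with a unique attracting fixed point (normalized to the origin by an affine change of coordinates that does not affect $\Om^c$ up to biholomorphism), along which $t\mapsto \dim_H J_{H_t}^+$ is non-constant. A natural choice is $H_t(x,y)=(y,\,y^2+c_t-ax)$ with $\abs{a}$ small and fixed and $c_t$ tracing a real-analytic arc in the interior of the hyperbolic component of the Mandelbrot set on which the one-variable Hausdorff dimension $c\mapsto \dim_H J(y^2+c)$ is non-constant. Structural stability guarantees that for $\abs{a}$ sufficiently small each $H_t$ is hyperbolic with a unique attracting fixed point (a perturbation of the attracting fixed point of $y^2+c_t$). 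The main obstacle is precisely the non-constancy of $t\mapsto \dim_H J_{H_t}^+$: one invokes the continuous (indeed real-analytic) dependence of $\dim_H J^+$ on parameters in the hyperbolic regime, proved via Bowen-type thermodynamic pressure formulas applied to the hyperbolic dynamics on $J^+$, together with a perturbative expansion as $a\to 0$ that relates $\dim_H J_{H_t}^+$ to $\dim_H J(y^2+c_t)$ and thereby transfers the classical non-constancy of the polynomial Hausdorff dimension to the H\'enon setting. Combining steps one and two yields the required continuum of pairwise biholomorphically inequivalent \short{2}'s.
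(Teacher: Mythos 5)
Your proof follows essentially the same strategy as the paper: use Proposition \ref{biholomorphism} to show that any biholomorphism of the sub-level sets must carry $J_{H_1}^+$ onto $J_{H_2}^+$, invoke the bi-Lipschitz invariance of Hausdorff dimension under biholomorphisms, and then exhibit a one-parameter family of hyperbolic H\'enon maps along which $\dim_H J_H^+$ varies. One side remark in your step one is false: $K_H^+$ is \emph{not} compact for a H\'enon map (its closure in $\mathbb P^2$ is $K_H^+\cup\{I^+\}$, as the paper itself uses in Section 3, so $K_H^+$ is unbounded and accumulates at $I^+$; the apparent contradiction with the lower bound in (\ref{L2}) is because that estimate should read $G_H^-$, not $G_H^+$). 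This error is harmless, however, since your subsequent argument does not use compactness: Hausdorff dimension is a countably local notion and is preserved by any biholomorphism defined on a neighbourhood of the set, which $\phi$ is because $J_{H_1}^+\subset K_{H_1}^+\subset\Om_1^c$.

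The real difference from the paper is in step two, and there it is one of economy rather than substance. Where you sketch a perturbative and thermodynamic-formalism argument (real-analytic dependence of the pressure/dimension in the hyperbolic regime, and a small-$a$ expansion relating $\dim_H J^+_{H_{a,c}}$ to $\dim_H J(y^2+c)$), the paper simply cites Corollary~6.8 of Wolf \cite{Wolf}, which already packages exactly this: for each $s\in(3,4)$ there are parameters $a(s),c(s)$ in the main cardioid with $H_{a(s),c(s)}$ hyperbolic, a unique attracting fixed point, and $\dim_H J^+_{a(s),c(s)}=s$. Your sketch is the right idea and is, in outline, the proof of Wolf's theorem, but as written it leaves the crucial non-constancy of $t\mapsto\dim_H J^+_{H_t}$ as an invocation rather than a derivation; filling that in would amount to reproving the cited result. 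Citing Wolf directly, as the paper does, closes the argument cleanly.
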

\begin{proof}
Corollary 6.8 of \cite{Wolf} (see \cite{BS3} also) shows that for $s$ in the open interval $(3,4)$, there exist $a = a(s), c = c(s) \in C_H$ ($C_H$ is the main cardioid of Mandelbrot set) such that the H\'{e}non map
\[
H_{a,c}(z,w)=(aw+z^2+c,z)
\]
is hyperbolic, has only one attracting fixed point, say $p_{(a,c)}$ in $\mbb C^2$ and if $\Om_{a, c}$ is the associated basin of attraction, then $J^+_{a,c}=\partial\Om_{a,c}$ and the Hausdorff dimension $J_{a,c}^+$ is $s$.

\medskip\no 
Let $G^+_{a,c}$ denote the Green's function corresponding to the H\'{e}non maps $H_{a,c}.$ Fix $C >0$ and consider the class of \short{2}'s given by
\[ \Om_{a,c}^{C}=\{ z \in \mbb C^2: G_{a,c}^+< C\}.\]
where $a=a(s)$ and $c=c(s)$ for $s \in (3,4).$

\medskip\no 
Let $s_0 \not= s_1 \in (3,4)$ and set $a_0=a(s_0),c_0=c(s_0)$ and $a_1=a(s_1),c_1=c(s_1)$ as above.

\medskip\no 
\textit{Claim:} $\Om_{a_0,c_0}^{C}$ is not biholomorphically equivalent to $\Om_{a_1,c_1}^{C}.$ 

\medskip\no 
Any biholomorphism $\phi: \Om_{a_0,c_0}^{C} \to \Om_{a_1,c_1}^{C}$ must satisfy $\phi(\Om_{a_0,c_0})=\Om_{a_1,c_1}$ by Proposition \ref{biholomorphism} and therefore
\[
\phi(J_{a_0,c_0}^+)=\phi(\partial\Om_{a_0,c_0})=\partial\Om_{a_1,c_1}=J_{a_1,c_1}^+
\]
since $\phi$ is biholomorphic near $J_{a_0,c_0}^+$. Then it must  be true that
\[ 
s_1=\text{dim}_H J_{a_1,c_1}^+=\text{dim}_H \phi(J_{a_0,c_0}^+)=\text{dim}_H J_{a_0,c_0}^+=s_0,
\]
which is a contradiction.

\end{proof}
\no 
Let $H$ be a H\'{e}non map as above and $G_H^\pm$ be the Green's functions of 
$H$. The sub-level sets of $G_H^+$
\[
\Omega_c=\{z\in \mathbb{C}^2: G_H^+ < c\}.
\]
are known to be \short{2}'s by \cite{Fo}.

\begin{prop}
$\Omega_c$ is not Reinhardt for any $c>0$.	
\end{prop}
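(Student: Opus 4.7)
The plan is to argue by contradiction. Suppose $\Om_c$ is Reinhardt for some $c > 0$. For each $\theta = (\theta_1, \theta_2)$, the rotation $R_\theta(x, y) = (e^{i\theta_1} x, e^{i\theta_2} y)$ is a holomorphic automorphism of $\mathbb{C}^2$ that preserves $\Om_c$ setwise, and hence restricts to a biholomorphism $\Om_c \to \Om_c$. Applying Proposition~\ref{biholomorphism} with $H_1 = H_2 = H$ and $\phi = R_\theta$ yields $R_\theta(K_H^+) = K_H^+$; that is, $K_H^+$ is itself invariant under the full torus of rotations.

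\medskip

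\no The second step is to transfer this invariance to $G_H^+$. Recall from Section~1 that $G_H^+$ is the pluricomplex Green's function of $K_H^+$ --- the maximal plurisubharmonic function in the Lelong class that vanishes on $K_H^+$. Then $G_H^+ \circ R_\theta$ is the pluricomplex Green's function of $R_\theta^{-1}(K_H^+) = K_H^+$ with the same rotation-invariant logarithmic growth, so by uniqueness $G_H^+ \circ R_\theta = G_H^+$. A standard Liouville-type argument (passing to logarithmic coordinates $\xi_j = \log z_j$, in which torus invariance reduces $G_H^+$ to a function of $\operatorname{Re}\xi_1, \operatorname{Re}\xi_2$ and pluriharmonicity then forces all second partials to vanish) shows that on the connected open set $U = (\mathbb{C} \setminus \{0\})^2 \setminus K_H^+$ one must have
\[
G_H^+(x, y) = a \log|x| + b \log|y| + c_0
\]
for some real constants $a, b, c_0$. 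Matching against $G_H^+(x, y) = \log^+|y| + O(1)$ in $V_R^+$ from \eqref{L1} forces $(a, b, c_0) = (0, 1, 0)$, so $G_H^+(x, y) = \log|y|$ on $U$. But $U$ contains points $(x, y)$ with $|x|$ arbitrarily large and $|y|$ any fixed value in $(0, 1)$ --- such points lie in $V_R^- \setminus K_H^+$ and off the coordinate axes --- and at them $\log|y| < 0$, contradicting $G_H^+ \geq 0$ on $\mathbb{C}^2$.

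\medskip

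\no The main obstacle is the first step: invoking Proposition~\ref{biholomorphism} to transfer the torus invariance from $\Om_c$ down to $K_H^+$. That proposition's proof rests on the Fatou--Bieberbach structure of $\mathrm{int}(K_H^+)$, which is where the hyperbolicity hypothesis on $H$ assumed in this section enters. The remaining two steps --- uniqueness of the pluricomplex Green's function and the classification of torus-invariant pluriharmonic functions --- are routine once the invariance of $K_H^+$ is in hand.
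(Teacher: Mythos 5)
Your proof is correct but diverges from the paper's after the common first step. Both proofs begin by invoking Proposition~\ref{biholomorphism} to conclude that the torus action preserves $K_H^+$. The paper then argues geometrically: if $(a,b)\in K_H^+$, torus-invariance of $K_H^+$ puts the circle $\{|x|=|a|,\, y=b\}$ inside $K_H^+$, whence $G_H^+(\cdot,b)\equiv 0$ vanishes on the filled disc by the maximum principle applied to the subharmonic slice function, so the disc $\{|x|\le |a|,\, y=b\}$ lies in $K_H^+$; taking $|b|>R$ (such points exist by the Claim in Case~(ii) of Section~3) forces $K_H^+\cap V_R^+\neq\emptyset$, a contradiction. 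You instead pass to the Green's function: torus-invariance of $K_H^+$ plus uniqueness of the pluricomplex Green's function gives $G_H^+\circ R_\theta=G_H^+$, and the classification of torus-invariant pluriharmonic functions on $(\mathbb C^*)^2$ in logarithmic coordinates forces $G_H^+=a\log|x|+b\log|y|+c_0$ on the connected open set $U$; the asymptotics $G_H^+=\log^+|y|+O(1)$ in $V_R^+$ pin down $a=0$, $b=1$, and then non-negativity of $G_H^+$ fails for $(x,y)\in U$ with $|y|$ small. The paper's route is shorter and avoids the (correct but slightly more involved) monodromy point about gluing the locally affine representation of $G_H^+$ over the non-simply-connected $U$; yours has the virtue of identifying the precise global functional form that torus-invariance would impose on $G_H^+$.

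One small inaccuracy: you claim the matching forces $(a,b,c_0)=(0,1,0)$. In fact $c_0$ is \emph{not} forced to vanish --- from the B\"{o}ttcher normalization \eqref{Green1} one has $G_H^+=\log|y|+\tfrac{1}{d-1}\log|c_H|+o(1)$ in $V_R^+$, so $c_0=\tfrac{1}{d-1}\log|c_H|$, which is generically nonzero. This does not damage the argument: you only need $a=0$, $b=1$, and then $(x,y)\in U$ with $|y|<e^{-c_0}$ (rather than merely $|y|<1$) already makes $G_H^+(x,y)<0$. Such points are readily produced --- for fixed small nonzero $|y|$ and $|x|$ large, the point lies in $V_R^-$, escapes to $V_R^+$ in one iterate, and hence lies outside $K_H^+$. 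You should also replace ``any fixed value in $(0,1)$'' by ``$|y|$ sufficiently small.''
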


\begin{proof}
Let us assume that $\Omega_c$ is a Reinhardt domain for some $c>0$. Then by Proposition \ref{biholomorphism}, any torus action preserving $\Omega_c$ also preserves $K_H^+$.  Let $(a,b)\in K_H^+$. Then since $K_H^+$ is also Reinhardt, the circle of radius $\lvert a \rvert$  lying in the horizontal line $y=b$ is in $K_H^+$.  Consider the disc $\mathcal{D}_b=\{(x,y):\lvert x\rvert \leq \lvert a \rvert, y=b\}$. Since $G_H^+$ restricted on the horizontal line $y=b$ is a subharmonic function  and $G_H^+$ vanishes on the boundary of $\mathcal{D}_b$, the maximum principle shows that $G_H^+$ vanishes on all of $\mathcal{D}_b$. Thus $\mathcal{D}_b \subset K_H^+$. Now if we can choose $b$ large enough, in particular $\lvert b\rvert >R$, then $\mathcal{D}_b$ and thus $K_H^+$ intersects $V_R^+$ which is clearly a contradiction. That points $(a, b) \in K_H^+$ exist with $\vert b \vert$ large enough is explained in Case $2$ of the subsection in which $F$ is shown to be a H\'{e}non map.
\end{proof}

\no The question that remains is whether $\Om_c$ can be biholomorphic to a Reinhardt domain. We do not know if this is possible, but it turns out that the \short{2}'s arising as sub-level sets of $H_{a, c}$ (as defined above) cannot be biholomorphic to a Reinhardt domain.

\medskip

\no To show this, recall that pseudoconvex Reinhardt domains are logarithmically convex. In fact, more is true -- they are also locally convexifiable near almost every point on their boundary. Here, a boundary point $p$ of a pseudoconvex Reinhardt domain, say $D \subset \mathbb{C}^k$, is said to be locally convexifiable if there is a neighbourhood $U$ of $p$ in $\mbb C^k$ and a biholomorphism $\phi$ from $U$ onto its image such that $\phi(U \cap D) \subset \mbb C^k$ is convex. Note that no assumptions are being made about the smoothness of $\pa D$ near $p$. We include a proof of this for the sake of completeness.

\begin{prop}\label{Green_Reinhardt}
Let $D \subset \mbb C^k$, $k \ge 2$, be a pseudoconvex Reinhardt domain. Then $D$ is locally convexifiable near every boundary point except possibly for a set of measure zero.
\end{prop}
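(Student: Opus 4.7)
The plan is to convexify $D$ using the exponential map in all variables, which works at every boundary point that lies off the coordinate hyperplanes. Consider the holomorphic covering $\Phi : \mbb{C}^k \to (\mbb{C}^*)^k$ given by $\Phi(w_1, \ldots, w_k) = (e^{w_1}, \ldots, e^{w_k})$. The standard characterization of pseudoconvex Reinhardt domains (see e.g.\ H\"ormander or Range) says that $D$ is pseudoconvex if and only if it is logarithmically convex, i.e.\ the preimage
\[
\widetilde{D} := \Phi^{-1}(D \cap (\mbb{C}^*)^k) = \{w \in \mbb{C}^k : (\mathrm{Re}\, w_1, \ldots, \mathrm{Re}\, w_k) \in \Omega\}
\]
for some convex set $\Omega \subset \mbb{R}^k$. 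A tube domain over a convex base is itself convex in $\mbb{C}^k$, so $\widetilde{D}$ is a convex open set.

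Next I would fix any $p \in \partial D$ with all coordinates nonzero and choose a preimage $q \in \Phi^{-1}(p)$. Since $d\Phi_q$ is invertible, there exist a Euclidean ball $V$ around $q$ and an open neighborhood $U$ of $p$ such that $\phi := (\Phi|_V)^{-1} : U \to V$ is a biholomorphism. Then
\[
\phi(U \cap D) = V \cap \widetilde{D}
\]
is an intersection of two convex open sets and is therefore convex, which is exactly the local convexification required by the definition.

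Finally, the set of boundary points missed by the above argument is
\[
E := \partial D \cap \bigcup_{j=1}^{k} \{z_j = 0\},
\]
a finite union of slices of $\partial D$ by complex hyperplanes. Each $\{z_j = 0\}$ is a real $(2k-2)$-dimensional submanifold of $\mbb{C}^k$, so $E$ has zero $(2k-1)$-dimensional Hausdorff measure on $\partial D$; this gives the ``set of measure zero'' clause in the statement.

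There is no genuine obstacle here: the only nontrivial input is the classical equivalence between pseudoconvexity and logarithmic convexity for Reinhardt domains, after which the exponential map performs the convexification in one stroke and the bookkeeping on the coordinate hyperplanes is routine.
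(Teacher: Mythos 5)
Your proof is correct and takes essentially the same route as the paper: near any boundary point off the coordinate hyperplanes, a local inverse of the exponential map carries $D$ onto an open piece of the tube domain over the (convex, by logarithmic convexity of pseudoconvex Reinhardt domains) base $\log D$, and intersecting that convex set with a convex neighbourhood in the $w$-coordinates gives the local convexification. The only difference is presentational: you invoke the convexity of tube domains over convex bases as a known fact, whereas the paper carries out the segment computation by hand.
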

\begin{proof}
For $1 \le j \le k$, let $Z_j=\{(z_1,\ldots,z_k) \in \mbb C^k: z_j=0\}$. Let 
\[ 
Z=\bigcup_{j=1}^k Z_j
\]
be the union of these hyperplanes.

\medskip

\no \textit{Claim:} $D$ is locally convexifiable near every $z_0=(z_1^0,\hdots,z_k^0) \in \partial D \setminus Z$.

\medskip\no 
For every $1 \le j \le k$, choose $w_j \in \mbb C$ such that $e^{w_j}=z_j^0$. There exists a sufficiently small $r_j>0$ and a branch of the multi-valued logarithm, say ${\rm Log}_j$ defined on the disc 
$\Delta(w_j, r_j) \subset \mbb C$ such that
\begin{align}\label{log}
{\rm Log}_j\Big({\rm exp}\big(\Delta(w_j, r_j)\big)\Big)=\Delta(w_j,r_j)\text{ and } {\rm Log}_j\big({\rm exp}(z)\big)=z
\end{align}
for every $z \in \Delta(w_j, r_j)$. Let $R=(r_1,\hdots,r_k)$ and $\Delta^k(w;R)$ the polydisc at the point $w=(w_1,\hdots,w_k)$. Let 
\[ 
\psi(z_1,\hdots,z_k)=(\exp z_1,\hdots,\exp z_k)
\]
and note that $V=\psi(\Delta^k(w;R))$ is a neighbourhood of $z_0$ due to the local injectivity of $\psi$. Let $W=V \cap D$ and define $\phi$ on $V$ as
\[ 
\phi(z_1,\hdots,z_k)=\big({\rm Log}_1(z_1),\hdots,{\rm Log}_k(z_k)\big).
\]
From (\ref{log}), $\phi$ is injective on $V$. To prove the claim, it is enough to show that $\phi(W)$ is a convex open set in $\mbb C^k.$

\medskip\no 
Let $p,q \in \phi(W)$ where $p=(p_1,\hdots,p_k)$ and $q=(q_1,\hdots,q_k)$. Then there exist $p',q' \in W$ such that $p=\phi(p'),q=\phi(q')$. Let 
\[
s_1=(\log|p'_1|,\hdots,\log|p'_k|) \text{ and }s_2=(\log|q'_1|,\hdots,\log|q'_k|).
\]
But $|p_j'|=\exp(\Re p_j)$ and $|q_j'|=\exp(\Re q_j)$ for every $1 \le j \le k$ and hence
\[
s_1=(\Re p_1,\hdots, \Re p_k) \text{ and }s_2=(\Re q_1,\hdots, \Re q_k).
\]
For $\la \in [0,1]$ let 
\[
s_\la=\la s_1+(1-\la)s_2 \text { and }p_\la=\la p+(1-\la)q.
\]
Then $\Re p_\la=s_\la$ and $s_\la \in \log D$ where  
\[ 
\log D =\{(\log |z_1|,\hdots,\log|z_k|):(z_1,\hdots,z_k) \in D\}.
\]
This means there exists $z_\la=(z_1^\la,\hdots,z_k^\la) \in \Om$ such that 
$s^\la_j=\log|z_j^\la|$ for $1 \le j \le k$ where 
$s_\la=(s_1^\la,\hdots,s_k^\la)$. Since $p,q \in \phi(W) \subset \Delta^k(w;R)$ and $\Delta^k(w;R)$ is convex, $p_\la \in \Delta^k(w;R)$ for all $\lambda \in [0,1]$. Let $\psi(p_\la) =(\tilde{p}_1^\la,\hdots\tilde{p}_k^\la)\in V$. Now 
\begin{align}\label{la point}
\log|\tilde{p}_j^\la|=\log \exp (\Re p_j^\la)=\Re p_j^\la=s_j^\la.
\end{align} 
As $D$ is Reinhardt, $s_\la \in \log D$ implies that 
\[ 
\{(z_1,\hdots,z_k) \in \mbb C^k: \log|z_j|=s_j^\la \text{ for every } 1 \le j \le k\} \subset D.
\]
By (\ref{la point}), $\psi(p_\la) \in D$ and hence $p_\la \in \phi(W)$ for all $\la \in [0,1]$. This proves the claim.
\end{proof}

\begin{thm}
There exist \short{2}'s that are not biholomorphically equivalent to a Reinhardt domain.
\end{thm}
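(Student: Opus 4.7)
The plan is to argue by contradiction using Proposition \ref{Green_Reinhardt} in combination with the non-integer Hausdorff dimension of Julia sets provided by Theorem \ref{continuum}. Fix $s \in (3, 4)$ and take $(a,c) = (a(s), c(s))$, so that the hyperbolic H\'{e}non map $H_{a,c}$ has Julia set $J_{a,c}^+$ of Hausdorff dimension $s$. Suppose for contradiction that the \short{2} domain $\Omega_{a,c}^C = \{G_{a,c}^+ < C\}$ is biholomorphic to a pseudoconvex Reinhardt domain $D \subset \mathbb{C}^2$ via $\phi: \Omega_{a,c}^C \to D$.

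The main technical step is to propagate the fractal geometry of $J_{a,c}^+$ outward to the boundary $\partial D$. The image $\phi(J_{a,c}^+) \subset D$ is compact of Hausdorff dimension $s$, but lies in the interior of $D$. To move fractal copies of it toward $\partial D$, I would consider the conjugate biholomorphism $\tilde H = \phi \circ H_{a,c} \circ \phi^{-1}: D \to \phi(\Omega_{a,c}^{C/d_H})$, which maps $D$ strictly into a proper subdomain. Iterating the partial inverse $\tilde H^{-1}$ wherever defined, a point of $D \setminus \phi(K_{a,c}^+)$ can be pushed outward along the level sets of $G_{a,c}^+ \circ \phi^{-1}$ until its orbit exits, producing a point arbitrarily close to $\partial D$. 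Applying this procedure uniformly to small neighbourhoods of $\phi(J_{a,c}^+)$ and passing to appropriate limits should yield a subset of $\partial D$ (or, more precisely, of the cluster set of $\phi$ on the boundary) whose Hausdorff dimension is at least $s$.

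Proposition \ref{Green_Reinhardt} then provides the contradiction. Away from the union $Z$ of coordinate hyperplanes, $D$ is locally biholomorphic via component-wise logarithms to an open convex subset of $\mathbb{C}^2$. That chart is bi-Lipschitz on compact subsets disjoint from $Z$, so it preserves Hausdorff dimension; consequently, on the locally-convex patches the boundary $\partial D$ is a Lipschitz hypersurface in $\mathbb{R}^4$ of real dimension exactly $3$. The fractal produced in the previous step would therefore be forced to lie entirely in $\partial D \cap Z$, which is at most real two-dimensional, contradicting its Hausdorff dimension $s > 3$. The principal obstacle in the plan is making the propagation argument rigorous: since $\phi$ has no \emph{a priori} boundary extension, one must exploit the uniform hyperbolicity of $H_{a,c}$ on $J_{a,c}^+$ together with the dynamical contraction of $\tilde H$ to control the accumulation quantitatively, so that a set of full Hausdorff dimension survives in the limit and lands on the generic (locally convex) portion of $\partial D$ rather than being absorbed into the exceptional set $Z$.
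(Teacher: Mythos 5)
The propagation step you flag as the ``principal obstacle'' is indeed a genuine gap, and it cannot be patched in the way you sketch: there is no reason the conjugated dynamics should push fractal copies of $\phi(J^+_{a,c})$ out to $\partial D$, still less to the generic part of $\partial D$ away from the coordinate hyperplanes, and a biholomorphism between \short{2}'s has no \emph{a priori} boundary extension, so nothing prevents the escaping orbits from accumulating entirely in the bad set $Z$. The idea you are missing is much simpler: apply Proposition \ref{Green_Reinhardt} not to the target Reinhardt domain $D$ but to the Fatou--Bieberbach domain $D' := \phi\bigl(\mathrm{int}(K^+_{a,c})\bigr)$ sitting \emph{inside} $D$. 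The torus action on $D$ pulls back under $\phi$ to a family of automorphisms of $\Omega^C_{a,c}$, and Proposition \ref{biholomorphism} forces each of these to preserve $K^+_{a,c}$, hence also $\mathrm{int}(K^+_{a,c})$; pushing forward again, the $\mathbb T^2$-action preserves $D'$, so $D'$ is itself a pseudoconvex Reinhardt domain whose boundary $\partial D' = \phi(J^+_{a,c})$ lies entirely inside $D$, where $\phi^{-1}$ is biholomorphic. Proposition \ref{Green_Reinhardt} now applies directly to $D'$: it is locally convexifiable off a set contained in $Z$ (Hausdorff dimension at most $2$), so $\dim_H(\partial D') \le 3$, and transferring back by $\phi^{-1}$ gives $\dim_H(J^+_{a,c}) \le 3$, contradicting $\dim_H(J^+_{a,c}) = s > 3$. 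No iteration or outward propagation is needed; the boundary carrying the fractal dimension is already a Reinhardt boundary, namely $\partial D'$, not $\partial D$.
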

\begin{proof}
Let $\Om_{a(s),c(s)}^{C_0}$ be the \short{2}'s considered in the proof of Theorem \ref{continuum}. Recall that ${\rm dim}_H(J^+_{a(s),c(s)})=s$, $s \in (3,4).$ 

\medskip\no 
For a fixed $s \in (3,4)$ and $C>0$, suppose that there exists a biholomorphism
\[
\phi : \Om_{a(s),c(s)}^{C} \ra G
\]  
where $G \subset \mbb C^2$ is a Reinhardt domain. Let 
\[
D=\phi\big({\rm int}(K^+_{a(s),c(s)})\big) \subset G. 
\]
Now $\phi$ induces a $\mbb T^2$-action on $\Om_{a(s),c(s)}^{C}$ which must preserve ${\rm int}(K^+_{a(s),c(s)})$ by Proposition \ref{biholomorphism}. It follows that $D$ must itself be a Reinhardt domain. $D$ is also pseudoconvex being a Fatou-Bieberbach domain. From Proposition \ref{Green_Reinhardt}, $D$ is  locally convexifiable near almost every boundary point. This implies that the Hausdorff dimension of $\pa D$ is $3$ almost everywhere. But since
\[
J^+_{a(s),c(s)}=\partial K^+_{a(s),c(s)}=\phi^{-1}(\partial D)
\]
and $\phi^{-1}$ is biholomorphic near $\pa D$, the same is true for 
$J^+_{a(s),c(s)}$. This cannot be true since its Hausdorff dimension is $s > 3$ at every point.
\end{proof}

\section{Appendix}
\no 
Let $P$ be a polynomial of degree $d\geq 2$ in the complex plane and let $K_P$ be the filled Julia set of $P$, i.e., 
\[
K_P=\{z\in \mathbb{C}:  \text{the sequence} \; \{P^n(z)\}\text{ is bounded}\}.
\]
\begin{thm}
Let $Q$ be an entire function satisfying $Q(K_P) = K_P$. Then $Q$ is a polynomial whose Julia set coincides with that of $P$.
\end{thm}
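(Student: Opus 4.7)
The plan is to import, as much as possible, the argument of Theorem \ref{rigidity} into the one-variable setting. Set $u = G_P \circ Q$; since $Q(K_P)=K_P$ and $G_P$ vanishes precisely on $K_P$, $u$ is a non-negative subharmonic function on $\mathbb{C}$ that vanishes on $K_P$ and is harmonic on $\mathbb{C}\setminus Q^{-1}(K_P)$. The first task is to show that $Q$ is a polynomial. Assuming one can justify that $Q^{-1}(K_P)$ is bounded, pick $R>0$ large enough so that $Q^{-1}(K_P)\subset \Delta(0,R)$; then on $\{|z|>R\}$, $u$ is harmonic and admits a multi-valued harmonic conjugate $v$ whose period around a large circle is some $b\ge 0$. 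Mimicking the slice argument in Section 3, $\exp(-(u+iv))$ has at worst a pole at infinity, which forces $u(z)=b\log^{+}|z|+O(1)$ on $\mathbb{C}$. Combining this with $G_P(z)=\log^{+}|z|+O(1)$ at infinity, and uniqueness of harmonic functions on $\mathbb{C}\setminus K_P$ vanishing on $\partial K_P$ with prescribed logarithmic growth, one concludes $u=bG_P$ on $\mathbb{C}$. From $G_P(Q(z))=bG_P(z)$ and the logarithmic growth of $G_P$ one obtains $|Q(z)|\le C|z|^{b}$ for $|z|$ large, so $Q$ is a polynomial of degree at most $b$.

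Once $Q$ is known to be a polynomial, matching leading asymptotics in $G_P(Q(z))=b G_P(z)$ forces $b=\deg Q$. Iterating gives $G_P(Q^n(z))=(\deg Q)^n G_P(z)$ for every $n$, so
\[
G_Q(z)=\lim_{n\to\infty}\frac{1}{(\deg Q)^n}\log^{+}\lvert Q^n(z)\rvert=\lim_{n\to\infty}\frac{1}{(\deg Q)^n}G_P(Q^n(z))=G_P(z),
\]
using the asymptotic $G_P(w)=\log^{+}|w|+O(1)$ at infinity. Hence $G_P=G_Q$, which gives $K_P=K_Q$ and in particular $J_P=\partial K_P=\partial K_Q=J_Q$, completing the proof.

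The main obstacle is justifying the boundedness of $Q^{-1}(K_P)$. In the setting of Theorem \ref{rigidity} this is automatic because $F$ is an automorphism of $\mathbb{C}^2$; here, for an a priori transcendental entire $Q$, the Great Picard theorem permits $Q^{-1}(K_P)$ to be unbounded, with $Q$ attaining values in $K_P$ on sequences going to $\infty$. The plan is to rule out this behaviour by combining the sharp equality $Q(K_P)=K_P$ with the positive logarithmic capacity of $K_P$ and potential-theoretic comparison principles of Bernstein--Walsh / Siciak extremal function type, which should force the Lelong number of $u$ at infinity to be finite and hence the growth of $Q$ to be at most polynomial. Once this polynomial growth is in hand, the rest of the argument above proceeds essentially as in the two-variable case.
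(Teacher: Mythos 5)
Your route for showing $Q$ is a polynomial is essentially the one the paper gestures at (transport the slice/period argument from Section~3 and then use logarithmic growth), and you have correctly put your finger on the point where the transport is not automatic: for a transcendental entire $Q$ the set $Q^{-1}(K_P)$ is \emph{always} unbounded (by Picard, since $K_P$ contains more than one point), so $G_P\circ Q$ is not harmonic on any neighbourhood of infinity and the period argument does not directly apply. The paper's Appendix disposes of this in one sentence (``a similar set of arguments as in the first part of the proof of Theorem~\ref{rigidity}\,''), which hides exactly the fact you noticed: in the $\mathbb{C}^2$ version the crucial input is that $F$ is an automorphism, so $F^{-1}(K_H^{\pm})=K_H^{\pm}$ and $G_H^{\pm}\circ F$ is pluriharmonic precisely off $K_H^{\pm}$; there is no analogue of this for a non-injective entire map. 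So the concern is genuine, and the paper glosses over it rather than resolving it. That said, your proposed remedy (``Bernstein--Walsh / Siciak comparison principles should force the Lelong number of $u$ at infinity to be finite'') is only a hope, not an argument --- nothing in the hypothesis $Q(K_P)=K_P$ obviously controls the mass of $\Delta(G_P\circ Q)$, which for transcendental $Q$ grows superlogarithmically. As it stands the proposal does not close the gap it identifies.

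There is a second gap you do not flag, and which again is present implicitly in the paper. Even once $Q$ is known to be a polynomial (so that $Q^{-1}(K_P)$ is compact), $G_P\circ Q$ is harmonic only on $\mathbb{C}\setminus Q^{-1}(K_P)$ and vanishes on all of $Q^{-1}(K_P)$; from $Q(K_P)=K_P$ one only gets $K_P\subset Q^{-1}(K_P)$, not equality. Your step ``uniqueness of harmonic functions on $\mathbb{C}\setminus K_P$ vanishing on $\partial K_P$ with prescribed logarithmic growth $\Rightarrow u=bG_P$'' tacitly assumes $Q^{-1}(K_P)=K_P$, which is precisely what one needs to prove (the paper's assertion $Q(I_P)=I_P$ in the Appendix is the same unproved claim in disguise). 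Once this is granted, the remainder of your argument is sound and, for the second half, genuinely different from the paper's: you derive $b=\deg Q$, iterate the functional equation to get $G_P(Q^n(z))=(\deg Q)^nG_P(z)$, and compare with the defining limit of $G_Q$ using the uniform bound $G_P(w)-\log^+|w|=O(1)$, obtaining $G_P=G_Q$ and hence $K_P=K_Q$, $J_P=J_Q$. The paper instead argues topologically, showing $Q(J_P)=J_P$ via the open mapping theorem and then invoking minimality of $J_Q$ among closed completely invariant sets. Your Green's-function route is cleaner and avoids the invariance bookkeeping, but note that it inherits whatever has to be said about $Q^{-1}(K_P)=K_P$ from the first half, so the two parts of the proposal are not independent.
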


To prove this, observe that a similar set of arguments as in the first part of the proof of Theorem {\ref{rigidity}} shows that if $g$ is the Green's function of $\mbb C \setminus K_P$ with pole at infinity, then $g \circ Q$ is a constant multiple of $g$. Combining this with the fact that $g$ behaves like 
$\log \vert z \vert$ near $z = \infty$, it follows that $Q$ has polynomial growth and hence must be a polynomial.

\medskip

Thus, it suffices to consider the following situation: $P$ and $Q$ are polynomials such that 
\[
Q(K_P)=K_P.
\]

\medskip 
\no 
Let $z\in \partial K_P=J_P$, then  there exist sequences $\{z_n\}\in {\rm{int}}(K_P)$ and $\{w_n\}\in I_P$ such that $z_n, w_n\rightarrow z$ and thus $Q(z_n), Q(w_n)\rightarrow Q(z)$ as $n\rightarrow \infty$. Now note that $Q(I_P)=I_P$ where $I_P$ is the unbounded component of the Fatou set $F_P$ of $P$. Further, by open mapping theorem $Q({\rm{int}}(K_P))\subset {\rm{int}}(K_P)$. Thus $Q(z)\in J_P$ and we get that
\[
Q(J_P)\subset J_P.
\]
Let $z\in J_P$, then there exists $y\in K_P$ such that $Q(y)=z$. Again by open mapping theorem since $Q({\rm{int}}(K_P))\subset {\rm{int}}(K_P)$, we have 
\[
J_P\subset Q(J_P).
\]
Therefore,
\[
Q(J_P)=J_P
\]
and since $J_Q$ is the smallest closed invariant (under $Q$) subset in $\mathbb{C}$, we get
\begin{equation}\label{J}
J_Q\subset J_P.
\end{equation}
Now we  have started with the assumption that $Q(K_P)=K_P$. Thus $K_P\subset K_Q$. Let there exists some $z \in K_Q$ such that $z \notin K_P$, then by (\ref{J}), there exists a small ball $B_z$ with center at $z$ such that $B_z\subset  \rm{int}(K_Q)$ and $B_z\cap K_P=\emptyset$. This contradicts (\ref{J}). Thus 
$K_P=K_Q$ and consequently, $J_P=J_Q$. Conversely, if $J_P=J_Q$, then $K_P= K_Q$.

\medskip 
\no 
To conclude, the assumption $Q(K_P)=K_P$ implies that $J_P=J_Q$. In fact, these are equivalent if both $P$ and $Q$ are polynomials.  As indicated in the introduction, $J_P$ and $J_Q$ coincide precisely when
\[
P\circ Q=\sigma \circ Q \circ P
\] 
where $\sigma:z\mapsto az+b$ with $\lvert a\rvert=1$ and $\sigma (J_P)=J_P$.


\end{document}